\theoremstyle{plain}
\newtheorem{lemma}{Lemma}
\newtheorem{theorem}[lemma]{Theorem}
\newtheorem{corollary}[lemma]{Corollary}
\newtheorem{proposition}[lemma]{Proposition}
\theoremstyle{definition}
\newtheorem{example}[lemma]{Example}
\theoremstyle{remark}
\newtheorem{remark}[lemma]{Remark}
\newcommand{\p}{\mathfrak{p}}
\DeclareMathOperator{\M}{\mathcal{M}}
\DeclareMathOperator{\Ocal}{\mathcal{O}}
\DeclareMathOperator{\Epsilon}{\mathcal{E}}
\DeclareMathOperator{\Div}{\mathrm{Div}}
\DeclareMathOperator{\supp}{\mathrm{Supp}}
\DeclareMathOperator{\ord}{\mathrm{ord}}
\DeclareMathOperator{\End}{\mathrm{End}}
\renewcommand{\char}{\text{char}}
\def\phi{\varphi}
\begin{document}	
	\title{Primitive divisors of sequences associated to elliptic curves over function fields}
	%\date{\today}
	\begin{abstract}
		We study the existence of a Zsigmondy bound for a sequence of divisors associated to points on an elliptic curve over a function field. More precisely, let $k$ be an algebraically closed field, let $\mathcal{C}$ be a nonsingular projective curve over $k$, and let $K$ denote the function field of $\mathcal{C}$. Suppose $E$ is an ordinary elliptic curve over $K$ and suppose there does not exist an elliptic curve $E_0$ defined over $k$ that is isomorphic to $E$ over $K$. Suppose $P\in E(K)$ is a non-torsion point and $Q\in E(K)$ is a torsion point of order $r$. The sequence of points $\{nP+Q\}\subset E(K)$ induces a sequence of effective divisors $\{D_{nP+Q}\}$ on $\mathcal{C}$. We provide conditions on $r$ and the characteristic of $k$ for there to exist a bound $N$ such that $D_{nP+Q}$ has a primitive divisor for all $n\geq N$. This extends the analogous result of Verzobio in the case where $K$ is a number field.
	\end{abstract}
	\author[Slob]{Robert Slob}
	\address{%
		Robert Slob,
		Institut für Reine Mathematik,
		Universität Ulm,
		Helmholtzstrasse 18,
		89081 Ulm,
		Germany
	}
	\email{robert.slob@uni-ulm.de}
	\maketitle
	
	\section{Introduction}
	Let $K$ be a number field with ring of integers $\Ocal_K$. Let $E/K$ be an elliptic curve that is given by a Weierstrass equation with integral coefficients, and suppose $P\in E(K)$ is a non-torsion point. For each positive integer $n$, we can write $(x(nP))=\frac{A_n}{D_n^2}$, where $A_n$ and $D_n$ are coprime ideals in $\Ocal_K$. The sequence of ideals $\{D_n\}$ forms a \textit{divisibility sequence}, meaning that if $m$ and $n$ are positive integers with $m$ dividing $n$, then $D_m$ divides $D_n$. 
	
	Some famous sequences such as the Mersenne sequence and Lucas sequence are examples of divisibility sequences. The divisibility sequence obtained from a non-torsion point on an elliptic curve is an example of an \textit{elliptic divisibility sequence}, which were first studied by Morgan Ward \cite{ward1948memoir}. The book \cite[Chapter 10]{everest2003recurrence} of Everest et al. gives a gentle introduction into the subject of elliptic divisibility sequences and provides a great historical account. For an interesting connection between matrix divisibility sequences and (elliptic) divisibility sequences, see \cite{cornelissen2012matrix}. Additionally, see the introduction of [op. cit.] for some recent research and applications of (elliptic) divisibility sequences. 
	
	Returning to our sequence $\{D_n\}$, let $n$ be a positive integer, then we say that $D_n$ has a \textit{primitive divisor} if there exists a prime ideal $\p$ of $\Ocal_K$ that divides $D_n$ and does not divide $D_m$ for any $1\leq m<n$. If $K=\mathbb{Q}$, then $D_n$ is simply an integer, and in this case, it was proved by Silverman in 1988 that there exists a bound $N$ such that $D_n$ has a primitive divisor for all $n\geq N$ \cite{silverman1988wieferich}. Such a bound is sometimes called a \textit{Zsigmondy bound} in the literature, dating back to Zsigmondy's study of the divisibility sequence $d_n=a^n-b^n$ for $a>b>0$ positive coprime integers in the late 19th century. Zsigmondy showed that if $n\notin\{1,2,6\}$, then $d_n$ has a primitive divisor \cite{zsigmondy1892theorie}. This generalises an earlier result of Bang with $b$ equal to $1$, see \cite{bang1886taltheoretiske}. An immediate application of the existence of a Zsigmondy bound would be to try and use this result to search for large prime numbers. For this to be computationally feasible, one wants the values $D_n$ to be prime themselves. In this direction, the Chudnovsky brothers found some promising results in 1986 in their experiments for certain values of $D_n$ coming from elliptic divisibility sequences as above \cite{chudnovsky1986sequences}. However, later research indicated that these sequences may not be very suitable for this application \cite{einsiedler2001primes, everest2004primes}. Nevertheless, there are other applications. Elliptic nets are a generalisation of elliptic divisibility sequences, which have been used by Stange for applications in cryptography \cite{stange2007tate}. Additionally, there have been applications to a generalisation of Hilbert's tenth problem for large subrings of the rational numbers \cite{cornelissen2007elliptic, eisentrager2009descent, poonen2003hilbert}.
	
	A natural question is whether it is possible to extend Silverman's result to other fields. In 1999, Cheon and Hahn proved the result when $K$ is a number field \cite{cheon1999orders}. The fact that the sequence $\{D_n\}$ is a divisibility sequence plays a major role in both this and Silverman's proof. Effective versions of these theorems have been proved as well \cite{ingram2012uniform, verzobio2020some}. In a different direction, one can also consider other sequences of points in $E(K)$ and raise similar questions. Suppose $Q\in E(K)$ is a point with $Q\neq -nP$ for any positive integer $n$. For each positive integer $n$, we can then similarly write $(x(nP+Q))=\frac{A'_n}{{D'_n}^2}$ with $A'_n$ and $D'_n$ ideals in $\Ocal_K$ that are relatively prime. In general, the sequence of ideals $\{D'_n\}$ will no longer be a divisibility sequence, but one can still pose the question whether there exists a bound $N$ such that $D'_n$ has a primitive divisor for all $n\geq N$. For a number field as base field, questions related to this are considered in \cite{everest2005prime}, and Verzobio proves in \cite{verzobio2020primitive} that for $Q$ a torsion point of prime order $r$, such a bound exists. In a later note, Verzobio extended this result to the case where $Q$ is an arbitrary torsion point \cite{verzobio2020primitiveCM}. Actually, much more is proved in [op. cit.]. Namely, for $R\subset \End(E)$ a Dedekind domain, the author proves results concerning primitive divisors for the sequence of points $\{\alpha(P)+Q\}_{\alpha\in R}$, including a result when $Q$ is not assumed to be a torsion point. This is an extension of the work by Streng in \cite{streng2008divisibility}, where for $R\subset \End(E)$ an arbitrary subring, results concerning primitive divisors for the sequence of points $\{\alpha(P)\}_{\alpha\in R}$ are proved.
	
	In this paper, we extend one of the aforementioned results of Verzobio to the setting where $K$ is the function field of a nonsingular projective curve $\mathcal{C}$ over an algebraically closed field $k$ of characteristic $p$. Suppose $E/K$ is an elliptic curve with point at infinity $O\in E(K)$. We next state some results concerning elliptic surfaces, see for example \cite[Chapters III \& IV]{silverman2013advanced} for details. We can associate an elliptic surface to $E$, and among those there exists a minimal proper regular model, unique up to $K$-isomorphism. Fix such a minimal proper regular model for $E$ and denote it by $\Epsilon$. Suppose $R\in E(K)$ is a point, then we obtain an associated section $\sigma_R:\mathcal{C}\to \Epsilon$. Let $\Ocal$ denote the image of $\sigma_O$. If $R$ is non-zero, it can be shown that $\sigma_R^*(\Ocal)$ is an effective divisor on $\mathcal{C}$. Given a non-zero point $R\in E(K)$, we denote $D_R:=\sigma_R^*(\Ocal)\in \Div(\mathcal{C})$. 
	
	Then, given a sequence of non-zero points $\{P_n\}\subset E(K)$, we obtain a sequence of effective divisors $\{D_{P_n}\}\subset \Div(\mathcal{C})$. Extending the earlier definitions, we say that a sequence of effective divisors $\{\mathcal{D}_n\}\subset \Div(\mathcal{C})$ is a \textit{divisibility sequence} if for all positive integers $m,n$ with $m$ dividing $n$, we have that $\mathcal{D}_n-\mathcal{D}_m$ is effective. Similarly, given a positive integer $n$, we say that $\mathcal{D}_n$ has a \textit{primitive divisor} if there exists $\gamma$ in the support of $\mathcal{D}_n$ such that $\gamma$ does not lie in the support of $\mathcal{D}_m$ for any $1\leq m<n$. We next state some results from \cite{ingram2012algebraic} and \cite{naskrkecki2016divisibility}, where the former concerns $\char(k)=p=0$ and the latter $p>0$. Suppose $P\in E(K)$ is a non-torsion point, then the sequence of divisors $\{D_{nP}\}$ is a divisibility sequence. Suppose that $E$ is ordinary and that $E$ is not isomorphic over $K$ to some elliptic curve $E_0/k$. Additionally, suppose $p\neq 2,3$, then there exists a bound $N$ such that for all $n\geq N$, $D_{nP}$ has a primitive divisor. Given these results, it is natural to pose the question whether the aforementioned results of Verzobio over number fields also hold in the setting of $K$ a function field as above. In this paper, we study one of these results. That is, we study the following question: let $Q\in E(K)$ be a torsion point of order $r$ and consider the sequence of divisors $\{D_{nP+Q}\}$, does there then exist a bound $N$ such that $D_{nP+Q}$ has a primitive divisor for all $n \geq N$? We prove that this is indeed true if we assume some minor conditions on $p$ and $r$. More precisely, we prove the following theorem.
	
	\begin{theorem}\label{thm:main}
		Let $k$ be an algebraically closed field of characteristic $p$, let $\mathcal{C}$ be a nonsingular projective curve over $k$ and let $K$ be the function field of $\mathcal{C}$. Suppose $E/K$ is an ordinary elliptic curve that is not isomorphic over $K$ to some elliptic curve $E_0/k$. Suppose $P\in E(K)$ is a non-torsion point and $Q\in E(K)$ is a torsion point of order $r$. If either $r=1$ and $p\neq2,3$ or the values of $p$ and $r$ are entries in Table \ref{tbl:mainthm}, then there exists a constant $N$ such that for all $n\geq N$, $D_{nP+Q}$ has a primitive divisor.
	\end{theorem}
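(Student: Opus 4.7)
The plan is to extend Verzobio's approach \cite{verzobio2020primitive} to the function-field setting, using as a black box the Ingram--Naskrkecki primitive-divisor theorem for the unshifted sequence $\{D_{nP}\}$, and exploiting the isogeny determined by the torsion point~$Q$.

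First I would record the basic reduction-theoretic observation that, for a closed point $\gamma\in\mathcal{C}$, $\gamma\in\supp(D_R)$ iff $R$ specializes to $O$ at $\gamma$ on the identity component of the fiber. Consequently, if $\gamma\in\supp(D_{nP+Q})\cap\supp(D_{mP+Q})$ for some $1\le m<n$, then $(n-m)P$ specializes to $O$ at $\gamma$, so $\gamma\in\supp(D_{(n-m)P})$. Hence any failure of primitive divisors for $D_{nP+Q}$ forces its support to lie in $\bigcup_{k=1}^{n-1}\supp(D_{kP})$, reducing the problem to quantifying this inclusion. I then package this with the torsion of~$Q$ via the isogeny $\phi\colon E\to E':=E/\langle Q\rangle$. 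Setting $P':=\phi(P)$, which is still non-torsion, one has $\phi(nP+Q)=nP'$, and a standard calculation on the minimal models yields the divisor identity
\[
D_{nP'} \;=\; \sum_{j=0}^{r-1} D_{nP+jQ} \;+\; (\text{correction supported on bad fibers of }\phi).
\]
Provided $E'$ is again ordinary and not isomorphic over~$K$ to a constant curve, the theorem of Ingram--Naskrkecki applies to the sequence $\{D_{nP'}\}$ on $\Epsilon'$ and yields a primitive divisor for all $n\ge N_0$.

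The crux of the proof is to upgrade a primitive divisor of $D_{nP'}$ to one of the specific shifted divisor $D_{nP+Q}$. A primitive divisor $\gamma$ of $D_{nP'}$ lands in $\supp(D_{nP+jQ})$ for a unique $j\in\{0,\ldots,r-1\}$, and since $\gamma\notin\bigcup_{j'}\supp(D_{mP+j'Q})$ for every $m<n$, such $\gamma$ is automatically primitive for $D_{nP+jQ}$ with that particular~$j$. To force $j=1$ I would use the quantitative control from heights. In the function field case one has $\deg D_{nP+jQ}=n^{2}\hat h(P)+O(1)$ for every~$j$ (since $Q$ is torsion and therefore orthogonal to~$P$ in the N\'eron--Tate pairing), and the local height parallelogram law
\[
\lambda_\gamma(nP+Q)+\lambda_\gamma(nP-Q)=2\lambda_\gamma(nP)+2\lambda_\gamma(Q)+c_\gamma
\]
controls $v_\gamma(D_{nP+Q})$ in terms of $v_\gamma(D_{nP})$ plus a bounded fiber correction. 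Combined with the first observation, this yields an upper bound on the degree of the putative ``non-primitive part'' of $D_{nP+Q}$ that is asymptotically smaller than $\deg D_{nP+Q}\sim n^{2}\hat h(P)$, producing a primitive divisor in the class $j=1$ for all sufficiently large~$n$.

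\textbf{Main obstacle.} The hardest step is the final quantitative sorting argument: extracting from the pooled primitive divisors of $D_{nP'}$ one that falls specifically in the class $j=1$ rather than in some other residue class, while simultaneously absorbing the bad-fiber correction in the divisor identity above. A related but more structural difficulty is verifying that the isogenous curve $E'=E/\langle Q\rangle$ inherits the hypotheses of the Ingram--Naskrkecki theorem (ordinary, non-isotrivial) in positive characteristic; the separability of~$\phi$ and the preservation of these properties depend delicately on~$p$ and~$r$, which is precisely what produces the list of admissible pairs in Table~\ref{tbl:mainthm}.
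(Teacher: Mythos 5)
There is a genuine gap at the step you yourself flag as the crux, and the proposed fix does not close it. Passing to $E'=E/\langle Q\rangle$ and invoking the Ingram--Naskr\k{e}cki theorem for $\{D_{nP'}\}$ only produces, for each large $n$, a place that is primitive for $D_{nP+j(n)Q}$ for \emph{some} $n$-dependent class $j(n)\in\{0,\dots,r-1\}$; nothing forces $j(n)=1$. Your quantitative sorting argument cannot supply this: all $r$ classes satisfy $\deg D_{nP+jQ}=n^2\hat h(P)+O(n)$ to leading order, and the parallelogram law compares $\ord_\gamma D_{nP+Q}$ with $\ord_\gamma D_{nP}$, whose total degree is again $n^2\hat h(P)+O(n)$ --- so there is no asymptotic saving, and the claimed bound on the ``non-primitive part'' of $D_{nP+Q}$ does not follow. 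Likewise, your first reduction only places a non-primitive $\gamma$ in $\bigcup_{k<n}\supp D_{kP}$, which is far too weak: $\sum_{k<n}\deg D_{kP}\sim n^3\hat h(P)/3$ swamps $\deg D_{nP+Q}\sim n^2\hat h(P)$, so ``quantifying this inclusion'' naively cannot succeed.

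The missing idea is a finer divisibility statement exploiting the relation $r(nP+Q)=rnP$: if $D_{nP+Q}$ has no primitive divisor, then every $\gamma$ in its support (outside the finite set $S=\bigcup_{b\mid r,\,b<r}\supp D_{bQ}$) lies in $\supp D_{\frac{rn}{d}P}$ for some divisor $d$ of $n$ with $d>r$ and $\gcd(d,r)=1$ (Proposition \ref{prop:gammainsupprndaswell}); a B\'ezout argument then gives $\ord_\gamma D_{nP+Q}\le p^{\ord_p(d)}\ord_\gamma D_{\frac{n}{d}P+b_dQ}+\epsilon_{d,\gamma}$ (Corollary \ref{cor:boundbyassociatedd}). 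Summing over $\gamma$ and then over $d$, and using $\deg D_{\frac{n}{d}P+b_dQ}\le n^2\hat h(P)/d^2+O(n/d)$ up to the factor $2$ from Lemma \ref{lem:heightbounds}(ii), one gets $n^2\hat h(P)\le 2n^2\hat h(P)\sum_{d}p^{e_d}/d^2+o(n^2)$, which is contradictory for large $n$ precisely when $\sum_d p^{e_d}/d^2<1/2$. This inequality --- not any property of the isogeny $E\to E/\langle Q\rangle$ --- is what produces Table \ref{tbl:mainthm}: the $p$-power weights $p^{e_d}$ come from the formal-group behaviour of multiplication by $p$ on an ordinary curve (the valuations $h_{E,\gamma}$ of the Hasse invariant in Proposition \ref{prop:valofmultiples}), and the condition $\zeta(2)(1+\tfrac1{p-1})-1-\tfrac14-\cdots-\tfrac1{r^2}<\tfrac12$ is exactly what the table encodes. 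Your attribution of the table to separability and preservation of ordinarity under $\phi$ is therefore a misdiagnosis; indeed the paper never forms the quotient curve at all, and whether non-isotriviality (in the sense of $K$-isomorphism to a constant curve) passes to $E'$ is an additional unverified hypothesis your route would require.
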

	\begin{table}
	\begin{center}
		\begin{tabular}{|l||l|l|l|l|l|}
			\hline
			$p$ &$0$&$5$ &$7$&$11,13$&$\geq 17$ \\ \hline
			$r $  &$\geq2$   & $5$ or $\geq10$  &$\geq4$ &$\geq 3$ &$\geq 2$ \\\hline
	\end{tabular}\end{center}
	\caption{Pairs $(p,r)$ with $r>1$ for which $D_{nP+Q}$ has a primitive divisor for all $n$ sufficiently large.}
	\label{tbl:mainthm}
	\end{table}
\numberwithin{lemma}{section}
\setcounter{lemma}{0}
	\begin{remark}
	In an earlier version of this paper, we assumed in above theorem that $Q$ had prime order unequal to $p$. We required this assumption to prove the corresponding versions of Proposition \ref{prop:gammainsupprndaswell} and Corollary \ref{cor:boundbyassociatedd}. It was pointed out to the author that we could get around this assumption by Verzobio. Additionally, Ulmer has pointed out to the author that the paper \cite{ulmer2019transversality} of Ulmer and Urs{\'u}a could be used to lift this restriction in the $p=0$ case, see especially [Remark 2.4, op. cit.].
	\end{remark}
	This paper is organised as follows. In Section \ref{sec:prelims}, we recall some preliminaries on height functions and properties of the divisor associated to a point on an elliptic curve over a function field. Afterwards, we present the proof of Theorem \ref{thm:main} in Section \ref{sec:prfmain}. Lastly, we discuss the necessity of some of the assumptions of Theorem \ref{thm:main} in Section \ref{sec:conditions}. In particular, we provide counterexamples if $E$ is not ordinary and we discuss the case where $k$ is not algebraically closed.
	
	\subsection*{Notation.} Throughout Sections \ref{sec:prelims} and \ref{sec:prfmain} of this paper, we fix the following notation. For $k$ a field, a \textit{curve} over $k$ is a scheme $X$ over $k$ that is integral, separated, of finite type, and of dimension $1$. We let $k$ be an algebraically closed field of characteristic $p\neq 2,3$. We let $\mathcal{C}$ be a nonsingular projective curve over $k$ and we let $K$ be the function field of $\mathcal{C}$. We let $E/K$ be an elliptic curve with point at infinity $O\in E(K)$. We assume that $E(K)$ has non-zero rank and is given by a Weierstrass equation in short form. Additionally, we assume that $E$ is not isomorphic over $K$ to some elliptic curve $E_0/k$, and if $p>0$, we assume that $E$ is ordinary. We let $\Epsilon$ be an elliptic surface associated to $E$ that is a minimal proper regular model. We let $P\in E(K)$ be a non-torsion point and we let $Q\in E(K)$ be a torsion point of order $r$. For a non-zero point $R\in E(K)$ and $\sigma_R:\mathcal{C}\to\Epsilon$ the associated section, we denote $D_R:=\sigma_R^*(\Ocal)\in \Div(\mathcal{C})$, where $\Ocal$ equals the image of the section $\sigma_O$. In Section \ref{sec:conditions}, we will use above notation as well, but we will relax some of the assumptions, which will be indicated clearly. Lastly, we will frequently use the big $O$ and little $o$ notation. The subscripts in the $O$ indicate that the chosen constant depends on these subscripts, e.g. for $\alpha,\beta\in\mathbb{R}$, $\alpha=\beta+O_{E,P}(1)$ means that $|\alpha-\beta|\leq C$ for some constant $C$ depending on $E$ and $P$.
	
	\section{Preliminaries}\label{sec:prelims}
	We first provide a more explicit description of the divisor associated to a non-zero point in $E(K)$. 
	\begin{lemma}\label{lem:descriptiondiv}
		Suppose $R$ is a non-zero point in $E(K)$ and $\gamma\in \mathcal{C}(k)$. Let $E'/K$ be an elliptic curve given by a Weierstrass equation that is minimal at $\ord_{\gamma}$ and isomorphic to $E$ over $K$ via the isomorphism $\phi:E\to E'$, then
		\[\ord_{\gamma}D_R=\max\left\{0,-\frac{1}{2}\ord_{\gamma}(x(\phi(R)))\right\}.\]
	\end{lemma}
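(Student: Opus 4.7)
The plan is to reduce to the case where $E$'s own Weierstrass equation is minimal at $\gamma$ and then to compute $\sigma_R^*(\Ocal)$ using a local parameter around the zero section. Since $\phi\colon E \to E'$ is an isomorphism over $K$ and minimal proper regular models are unique up to $K$-isomorphism, $\phi$ extends to an isomorphism $\Epsilon \to \Epsilon'$ of elliptic surfaces over $\mathcal{C}$ that sends the zero section to the zero section and $\sigma_R$ to $\sigma_{\phi(R)}$. This identifies $\sigma_R^*(\Ocal)$ with $\sigma_{\phi(R)}^*(\Ocal')$, so one may replace $(E,R)$ by $(E',\phi(R))$ and assume throughout that the chosen Weierstrass equation is minimal at $\gamma$.

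I would then work locally over $\mathrm{Spec}\,\Ocal_{\mathcal{C},\gamma}$. The minimal Weierstrass model $W$ of $E$ is smooth at the point $(O,\gamma)$, since any singularity of a Weierstrass fibre lies at an affine point rather than at the point at infinity. Hence $W$ coincides with $\Epsilon$ on an open neighbourhood of $(O,\gamma)$. In the standard chart with coordinates $(z,w) = (-x/y, -1/y)$, the equation $y^2 = x^3 + Ax + B$ rewrites as $w = z^3 + Azw^2 + Bw^3$, which by iteration expresses $w$ uniquely as a power series in $z$ with coefficients in $\Ocal_{\mathcal{C},\gamma}$. In particular $w \in (z)$, so the zero section $\Ocal$ is locally cut out by the single equation $z = 0$ in this neighbourhood.

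Given this local description, $\sigma_R^*(\Ocal)$ equals the divisor of $z(R) = -x(R)/y(R)$ near $\gamma$. If $\ord_\gamma(x(R)) \geq 0$, minimality forces $\ord_\gamma(y(R)) \geq 0$ as well, so $R$ reduces to an affine point at $\gamma$, the section $\sigma_R$ misses $\Ocal$ there, and $\ord_\gamma D_R = 0$, matching the formula. Otherwise $\ord_\gamma(x(R)) < 0$, and comparing valuations in $y^2 = x^3 + Ax + B$ (using $\ord_\gamma(A),\ord_\gamma(B) \geq 0$) forces $\ord_\gamma(x(R)) = -2e$ and $\ord_\gamma(y(R)) = -3e$ for some positive integer $e$, so $\ord_\gamma(z(R)) = \ord_\gamma(x(R)) - \ord_\gamma(y(R)) = e = -\tfrac{1}{2}\ord_\gamma(x(R))$.

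The main subtlety lies in the second paragraph: confirming that the minimal proper regular model $\Epsilon$ coincides locally with the minimal Weierstrass model near the zero section, and that $z$ is a local equation of $\Ocal$ there. Both are standard, relying respectively on Weierstrass singularities occurring only at affine points of the special fibre and on $z = -x/y$ being the standard formal parameter at $O$ on every fibre, but they need to be invoked carefully to keep the reduction to a single, explicitly verifiable local computation.
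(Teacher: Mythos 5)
Your argument is correct, but note that the paper itself does not prove this lemma: it simply cites \cite[Lemma 5.2]{ingram2012algebraic} and remarks that the proof there extends from $D_{nP}$ to general $D_R$. What you have written is, in substance, a self-contained version of that argument, and all the key steps are sound: the uniqueness of the minimal proper regular model lets you replace $(E,R)$ by $(E',\phi(R))$ and identify $\sigma_R^*(\Ocal)$ with $\sigma_{\phi(R)}^*(\Ocal')$; the minimal Weierstrass model is smooth over $\Ocal_{\mathcal{C},\gamma}$ near the point at infinity of the special fibre (singularities of the Weierstrass fibre occur only at affine points), so the resolution $\Epsilon\to W$ is an isomorphism there and $z=-x/y$ is a local equation for $\Ocal$ by the relation $w=z^3u$ with $u$ a unit; and the valuation bookkeeping ($\ord_\gamma x=-2e$, $\ord_\gamma y=-3e$ when $\ord_\gamma x<0$, versus $\sigma_R(\gamma)$ avoiding $\Ocal$ when $\ord_\gamma x\geq 0$) gives exactly the claimed formula. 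The only cosmetic point worth tightening: the lemma allows $E'$ to be given by an arbitrary Weierstrass equation minimal at $\ord_\gamma$, whereas your local computation is written for the short form $y^2=x^3+Ax+B$; the identical argument works for a general integral Weierstrass equation (the leading terms $y^2$ and $x^3$ still dominate when $\ord_\gamma x<0$), or one can invoke the paper's standing assumption $p\neq 2,3$ and Lemma \ref{lem:minimaleq} to choose the minimal equation in short form, since the quantity $\max\bigl\{0,-\tfrac{1}{2}\ord_\gamma(x(\phi(R)))\bigr\}$ is then unaffected.
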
 
	\begin{proof}[Proof]
		This is proved in \cite[Lemma 5.2]{ingram2012algebraic}, where we note that although the lemma stated there only concerns $D_{nP}$, the proof holds in this more general setting as well.
	\end{proof}
	\subsection{Heights}
	We next recall some properties of the (canonical) height map on $E$. We define the \textit{height} $h:E(K)\to \mathbb{Z}_{\geq 0}$ by 
	\[h(R)=\begin{cases}
		0,\ &\text{ if }R=O,\\
		\deg(x(R)), &\text{ otherwise}.\end{cases}\]
	The height of a non-zero point and the degree of its associated divisor are closely related. To show this, we require the following lemma.
	\begin{lemma}\label{lem:minimaleq}
		Let $\gamma\in \mathcal{C}(k)$, then there exists $u\in K^\times$ such that the change of coordinates $(x,y)\mapsto(u^2x,u^3y)$ is minimal at $\ord_{\gamma}$. 
	\end{lemma}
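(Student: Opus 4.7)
The plan is to exploit the effect of the scaling $(x,y)\mapsto(u^2x,u^3y)$ on a short Weierstrass equation: first I would compute how it acts on $A$, $B$, and $\Delta$, and then I would choose $\ord_{\gamma}(u)$ as large as possible subject to the resulting coefficients being integral at $\gamma$, which will automatically minimise the valuation of the discriminant.

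Since $p\neq 2,3$, $E$ may be written in short Weierstrass form $y^2=x^3+Ax+B$ with $A,B\in K$ and $\Delta=-16(4A^3+27B^2)\in K^\times$. A direct substitution shows that the scaling $(x,y)\mapsto(u^2x,u^3y)$ produces the equation
\[y^2=x^3+(u^{-4}A)x+(u^{-6}B),\]
whose discriminant is $u^{-12}\Delta$. Integrality at $\ord_{\gamma}$ becomes the pair of inequalities $\ord_{\gamma}(u)\leq\ord_{\gamma}(A)/4$ and $\ord_{\gamma}(u)\leq\ord_{\gamma}(B)/6$. Among admissible $u$, the quantity
\[\ord_{\gamma}(u^{-12}\Delta)=\ord_{\gamma}(\Delta)-12\,\ord_{\gamma}(u)\]
is minimised precisely when $\ord_{\gamma}(u)$ is as large as possible. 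Moreover, in characteristic $\neq 2,3$ any two short Weierstrass equations for a fixed elliptic curve are related exactly by a scaling of this form (any $x$-translation would reintroduce an $x^2$ term, and any $y$-translation or shear would reintroduce a $y$ or $xy$ term), so maximising $\ord_{\gamma}(u)$ is indeed the condition for minimality at $\ord_{\gamma}$ within the class of short Weierstrass equations.

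To finish, I would invoke nonsingularity of $\mathcal{C}$: the local ring $\mathcal{O}_{\mathcal{C},\gamma}$ is a DVR, so there exists a uniformizer $\pi\in K^\times$ with $\ord_{\gamma}(\pi)=1$. Setting
\[m:=\min\bigl\{\lfloor\ord_{\gamma}(A)/4\rfloor,\ \lfloor\ord_{\gamma}(B)/6\rfloor\bigr\}\in\mathbb{Z}\]
(which is finite since $\Delta\neq 0$ forces at least one of $A,B$ to be non-zero) and $u:=\pi^m$ realises the largest admissible value of $\ord_{\gamma}(u)$, so the transformed equation is minimal at $\ord_{\gamma}$ as required.

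No real obstacle arises; the only point requiring care is the identification of the set of admissible transformations between short Weierstrass equations, which rules out any more exotic minimising change of variables and reduces the problem to a single maximisation of $\ord_{\gamma}(u)$.
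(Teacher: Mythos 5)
Your computation of how the scaling acts on $A$, $B$, and $\Delta$, and your reduction to maximising $\ord_{\gamma}(u)$ subject to integrality, are both correct, and your route is genuinely different from the paper's: you construct the candidate minimal equation explicitly by choosing $u=\pi^m$ with $m=\min\{\lfloor\ord_{\gamma}(A)/4\rfloor,\lfloor\ord_{\gamma}(B)/6\rfloor\}$, whereas the paper starts from the abstract existence of a minimal (possibly long) Weierstrass equation via \cite[Proposition VII.1.3]{silverman2009arithmetic} and then massages it into short form. Your identification of the admissible transformations between short Weierstrass equations matches the paper's.

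However, there is a gap at the final step. You establish that your equation minimises $\ord_{\gamma}(\Delta)$ \emph{within the class of integral short Weierstrass equations}, but minimality at $\ord_{\gamma}$ is by definition minimality of $\ord_{\gamma}(\Delta)$ over \emph{all} integral Weierstrass equations for $E$, long ones included -- and the later lemmas of the paper (e.g.\ the formula $\ord_{\gamma}D_R=\max\{0,-\tfrac12\ord_{\gamma}(x(\phi(R)))\}$) genuinely need minimality in this full sense. A priori a long integral model could have strictly smaller $\ord_{\gamma}(\Delta)$ than every short integral model. You need to rule this out: given any integral long equation, complete the square and the cube; since the residue field at $\gamma$ is $k$ and $\mathrm{char}(k)\neq2,3$, the elements $\tfrac12$ and $\tfrac13$ lie in the valuation ring, so the resulting short equation is still integral, and because these translations are changes of variable with $u=1$ they leave $\Delta$ unchanged. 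Hence the minimal discriminant valuation over all integral models equals the minimum over short integral models, and your conclusion follows. This completing-the-square-and-cube step is precisely the content of the paper's proof, applied there in the opposite direction (starting from a genuinely minimal long equation and producing a minimal short one).
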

	\begin{proof}[Proof]
		By \cite[Proposition VII.1.3]{silverman2009arithmetic}, we know that there exists a change of variables with values in $K$ such that we obtain a minimal equation at $\ord_{\gamma}$ for $E$, say 
		\[y^2+a_1xy+a_3y=x^3+a_2x^2+a_4x+a_6.\]
		Let $R\subset K$ be the valuation ring corresponding to $\ord_{\gamma}$, then we also obtain from [loc. cit.] that a change of variables $(x,y)\mapsto (x+r,y+sx+t)$ with $r,s,t\in R$ again results in a minimal equation at $\ord_{\gamma}$. Since $\char(K)\neq 2$ and $a_1,a_3,-\frac{1}{2}\in R$, the change of coordinates $(x,y)\mapsto(x,y-\frac{1}{2}(a_1x+a_3))$ then results in a Weierstrass equation for $E$ that is minimal at $\ord_{\gamma}$ of the form
		\[y^2=x^3+a_2'x^2+a_4'x+a_6'.\]
		Similarly, since $\char(K)\neq 3$, we can then make the substitution $(x,y)\mapsto(x,y-\frac{1}{3}a_2')$ to obtain a Weierstrass equation in short form that is minimal at $\ord_{\gamma}$. It can be shown that if the initial equation is in short form, then the only change of variables such that the resulting equation is again in short form is of the form $(x,y)\mapsto(u^2x,u^3y)$ for some $u\in K^\times$. Since both our original equation and the equation obtained from the composition of these changes of variables are in short form, this composition of changes of variables is of the required form, thus proving the lemma. 
	\end{proof}
	\begin{lemma}\label{lem:heightvsdegree}
		Let $R$ be a non-zero point in $E(K)$, then $h(R)=2\deg(D_{R})+O_E(1)$.
	\end{lemma}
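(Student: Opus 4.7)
The plan is to express both sides as sums of local contributions over $\gamma\in\mathcal{C}(k)$ and to show that they agree at all but finitely many $\gamma$, with a uniformly bounded discrepancy at the exceptional places.

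First, since $x(R)\in K^\times$ and $\mathcal{C}$ is a nonsingular projective curve over an algebraically closed field, every principal divisor has degree zero, so the degree of the pole divisor equals the degree of the zero divisor, giving
\[
h(R)=\deg(x(R))=\sum_{\gamma\in\mathcal{C}(k)}\max\{0,-\ord_{\gamma}(x(R))\}.
\]
For each $\gamma$, apply Lemma \ref{lem:minimaleq} to obtain $u_\gamma\in K^\times$ such that the change of coordinates $\phi_\gamma:(x,y)\mapsto(u_\gamma^2 x,u_\gamma^3 y)$ produces an equation minimal at $\ord_\gamma$. Set $v_\gamma:=\ord_\gamma(u_\gamma)$. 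Then $\ord_\gamma(x(\phi_\gamma(R)))=2v_\gamma+\ord_\gamma(x(R))$, so by Lemma \ref{lem:descriptiondiv},
\[
2\ord_\gamma(D_R)=\max\{0,-2v_\gamma-\ord_\gamma(x(R))\}.
\]

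Next, I would observe that the fixed short Weierstrass equation for $E$ has coefficients $A,B\in K$, so it fails to be minimal at $\gamma$ only when $\ord_\gamma(A)\geq 4$ and $\ord_\gamma(B)\geq 6$, which happens at only finitely many $\gamma$. At every other $\gamma$ one may take $u_\gamma=1$, i.e.\ $v_\gamma=0$, and then
\[
2\ord_\gamma(D_R)=\max\{0,-\ord_\gamma(x(R))\},
\]
so the local contributions to $h(R)$ and $2\deg(D_R)$ coincide.

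Let $S_E\subset\mathcal{C}(k)$ denote the finite set of $\gamma$ where the chosen equation is not minimal; $S_E$ depends only on $E$. Summing over $\gamma$ and subtracting gives
\[
h(R)-2\deg(D_R)=\sum_{\gamma\in S_E}\Bigl(\max\{0,-\ord_\gamma(x(R))\}-\max\{0,-\ord_\gamma(x(R))-2v_\gamma\}\Bigr).
\]
Using $|\max\{0,a\}-\max\{0,a-b\}|\leq |b|$, each summand is bounded by $2|v_\gamma|$, so
\[
|h(R)-2\deg(D_R)|\leq 2\sum_{\gamma\in S_E}|v_\gamma|,
\]
and the right-hand side depends only on $E$. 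This yields $h(R)=2\deg(D_R)+O_E(1)$, as required.

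There is no real obstacle; the only point requiring care is checking that the integers $v_\gamma$ are zero outside a finite set determined by $E$ alone (independent of $R$), which follows from the fact that the fixed short Weierstrass equation is globally given and hence minimal away from finitely many places.
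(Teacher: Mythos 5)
Your proof is correct and takes essentially the same route as the paper: both decompose $h(R)$ and $2\deg(D_R)$ into local contributions via Lemma \ref{lem:descriptiondiv}, note that the fixed Weierstrass equation is minimal outside a finite set of places depending only on $E$, and bound the discrepancy at those exceptional places by $2|\ord_\gamma(u_\gamma)|$ using Lemma \ref{lem:minimaleq}. The only cosmetic differences are your explicit minimality criterion for the short Weierstrass form and the Lipschitz bound on $\max\{0,\cdot\}$, both of which the paper leaves implicit.
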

	\begin{proof}[Proof]
		There exist only finitely many points $\gamma\in \mathcal{C}(k)$ for which the Weierstrass equation of $E$ is not minimal at $\ord_{\gamma}$, say at all but $\gamma_1,\gamma_2,\ldots,\gamma_n\in \mathcal{C}(k)$ for some positive integer $n$. Using Lemma \ref{lem:descriptiondiv}, we have
		\begin{align*}h(R)=\deg(x(R))&=\sum_{\gamma\in \mathcal{C}(k)}\max\{0,-\ord_{\gamma}(x(R))\}=2\sum_{\gamma\in \mathcal{C}(k)}\max\{0,-1/2\ord_{\gamma}(x(R))\}\\& = 2\deg(D_R)+2\sum_{i=1}^n\left(\max\{0,-1/2\ord_{\gamma_i}(x(R))\}-\ord_{\gamma_i}(D_R)\right).\end{align*}
		\noindent By Lemmas \ref{lem:descriptiondiv} and \ref{lem:minimaleq}, there exists for each $1\leq i\leq n$ some $u_i\in K^\times$ such that $\ord_{\gamma_i}(D_R)=\max\{0,\ord_{\gamma_i}(u_i)-1/2\ord_{\gamma_i}(x(R))\}$, so above summands are bounded by $\ord_{\gamma_i}(u_i)$. Since $\ord_{\gamma_i}(u_i)$ depends only on $E$, this proves the lemma.
	\end{proof}
	We next work towards defining a \textit{canonical height} $\hat{h}:E(K)\to\mathbb{R}_{\geq 0}$ and stating some of its properties. We first require a preliminary proposition. Given points $R,S\in E(K)$, we let $(RS)$ denote the intersection number of the curves $(R):=\sigma_R(\mathcal{C})$ and $(S):=\sigma_S(\mathcal{C})$ on the surface $\Epsilon$. If $R$ is non-zero, then $(RO)$ is simply equal to $\deg D_R$.
	\begin{proposition}\label{prop:bilinform}
		Suppose $p>3.$ For all $R,S\in E(K)$, there exists a function $C(R,S,E)$ depending on $R,S,E$ such that the pairing $\langle\cdot,\cdot\rangle:E(K)\times E(K)\to \mathbb{R}$,
		\[\langle R,S\rangle=(RO)+(SO)-(RS)-C(R,S,E)\] is symmetric and bilinear. Moreover, the function $C(R,S,E)$ can be bounded independently of $R$ and $S$. If $R$ and $S$ are non-zero, then 
		\[\langle R,S\rangle=\deg D_R+\deg D_S-(RS)-C(R,S,E).\]
	\end{proposition}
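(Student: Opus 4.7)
The plan is to construct $\langle \cdot, \cdot \rangle$ by a Tate-style averaging procedure applied to the naive pairing $f(R,S) := (RO) + (SO) - (RS)$ on the elliptic surface $\Epsilon$; the correction $C(R,S,E)$ will emerge as the bounded defect between $f$ and its symmetrized-bilinear replacement.

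First I would use the theorem of the square on the generic fiber $E/K$: as divisors on $E$, one has $(R+S)+(R-S) \sim 2(R)+2(S)-2(O)$, which lifts to a linear equivalence on $\Epsilon$ modulo divisors supported on the (finitely many) reducible or non-smooth fibers of $\Epsilon \to \mathcal{C}$. Intersecting this relation with a third section $(T)$ yields
\[f(R+S,T) + f(R-S,T) - 2f(R,T) - 2f(S,T) = \varepsilon(R,S,T),\]
where $\varepsilon(R,S,T)$ is a sum, over bad fibers $F_v$, of intersection numbers of the section $(T)$ with a vertical correction divisor supported on $F_v$. Because each section meets each fiber transversally at a single smooth point, this intersection number equals the multiplicity of the component met; combined with the Kodaira--Néron classification (available thanks to $p \neq 2,3$) and the bounded number of components per fiber, $|\varepsilon(R,S,T)|$ is bounded by a constant depending only on $E$.

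Next I would set
\[\langle R, S \rangle := \lim_{n \to \infty} \frac{f(nR, nS)}{n^2}.\]
The standard telescoping argument, using the uniform bound on $\varepsilon$, shows the limit exists and that $|\langle R,S\rangle - f(R,S)|$ is uniformly bounded; defining $C(R,S,E) := f(R,S) - \langle R, S\rangle$ then gives the claimed formula with $C$ bounded independently of $R,S$. Symmetry of $\langle\cdot,\cdot\rangle$ is inherited from $f$. For bilinearity, the parallelogram identity
\[\langle R+S, T\rangle + \langle R-S, T\rangle = 2\langle R,T\rangle + 2\langle S,T\rangle\]
holds identically (the $\varepsilon$-error is killed in the limit), and since $\langle O, T\rangle = \lim n^{-2}(OO) = 0$ by direct computation, a standard elementary argument promotes this to $\mathbb{Z}$-linearity in $R$; symmetry then yields bilinearity in both arguments.

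The last assertion, that $\langle R,S\rangle = \deg D_R + \deg D_S - (RS) - C(R,S,E)$ for non-zero $R,S$, is immediate from $(RO) = \deg \sigma_R^*\Ocal = \deg D_R$, as noted just before the proposition. The main technical obstacle is the uniform bound on $\varepsilon$: this reduces to a finite combinatorial computation on the intersection matrices of the bad fibers, and it is precisely here that the assumptions $p > 3$ and $E$ ordinary enter, ensuring the classification of special fibers behaves as in characteristic zero.
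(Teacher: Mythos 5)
Your strategy (a Tate-style limit applied to the naive intersection pairing) is genuinely different from the paper's proof, which simply invokes Shioda's explicit formula $\langle R,S\rangle=\chi(\Epsilon)+(RO)+(SO)-(RS)-\sum_v\mathrm{contr}_v(R,S)$ together with its symmetry and bilinearity (Shioda, Theorem 8.6) and then quotes Naskr\k{e}cki's Lemma 7.3 to bound the fibral contributions $\mathrm{contr}_v(R,S)$ uniformly. Your route could in principle be made to work, but as written it contains a concrete error at its central step.

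The linear equivalence you start from is false. Under the isomorphism $E\cong\mathrm{Pic}^0(E)$, $R\mapsto[(R)-(O)]$, one computes $(R+S)\sim(R)+(S)-(O)$ and $(R-S)\sim(R)-(S)+(O)$, so the correct relation is $(R+S)+(R-S)\sim 2(R)$; the class of $(R+S)+(R-S)-2(R)-2(S)+2(O)$ corresponds to $-2S$, which is nonzero unless $2S=O$. Relatedly, the ``parallelogram identity'' $\langle R+S,T\rangle+\langle R-S,T\rangle=2\langle R,T\rangle+2\langle S,T\rangle$ is the identity for a quadratic form, not for a pairing bilinear in its first argument: any bilinear pairing satisfies $\langle R+S,T\rangle+\langle R-S,T\rangle=2\langle R,T\rangle$, and the identity you assert would force $\langle S,T\rangle=0$ identically. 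Consequently the quantity $f(R+S,T)+f(R-S,T)-2f(R,T)-2f(S,T)$ is \emph{not} $O_E(1)$; it equals $-2\langle S,T\rangle+O_E(1)$, which is unbounded as $S,T$ vary, so your error term $\varepsilon(R,S,T)$ cannot be bounded in terms of $E$ alone and the subsequent telescoping/limit argument does not establish bilinearity. (A further point: extending a principal divisor from the generic fiber to $\Epsilon$ introduces not only components of bad fibers but also a multiple $cF$ of a full fiber with $c$ growing with the heights involved; this term must be cancelled against the $(\cdot\,O)$ terms in $f$ rather than absorbed into a bounded $\varepsilon$.) The repair is to work with the true relation $(R+S)\sim(R)+(S)-(O)$: intersecting its extension to $\Epsilon$ with $(T)$ and with $(O)$ and subtracting yields the quasi-additivity $f(R+S,T)=f(R,T)+f(S,T)+O_E(1)$ with a constant depending only on $\Epsilon$, from which the doubling estimate $f(2R,2S)=4f(R,S)+O_E(1)$, convergence of the limit, boundedness of $C(R,S,E)$, and exact bilinearity all follow. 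Two smaller inaccuracies: the Kodaira--N\'eron classification is available in every characteristic (what $p>3$ buys in the paper is compatibility with Naskr\k{e}cki's setup), and ordinarity of $E$ plays no role in this proposition.
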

	\begin{proof}[Proof]
		By \cite[Theorem 8.6]{shioda1990mordell}, this pairing is symmetric and bilinear, and $C(R,S,E)$ splits in $\chi(\Epsilon)+C'(R,S)$, where $\chi(\Epsilon)$ denotes the arithmetic genus of $\Epsilon$ and $C'(R,S)$ is some constant depending only on $R$ and $S$. Since $\Epsilon$ is constructed from $E$, the first assertion follows. In \cite[Lemma 7.3]{naskrkecki2016divisibility}, it is proved that the $C'(R,S)$ part can be bounded by another constant depending only on $\Epsilon$, so $C(R,S,E)$ can be bounded independently of $R$ and $S$. The last statement follows since $(RO)=\deg D_R$ if $R$ is non-zero and similarly for $S$.
	\end{proof}
	\begin{proposition}\label{prop:canonheight}
		Let $R,S\in E(K)$ be arbitrary points. There exists a map $\hat{h}:E(K)\to\mathbb{R}_{\geq 0}$ satisfying the following properties:
		\begin{enumerate}[noitemsep, label=(\roman*)]
			\item $\hat{h}(R)=\frac{1}{2}h(R)+O_E(1)$, and if $R$ is non-zero, then $\hat{h}(R)=\deg D_R+O_E(1)$;
			\item for all $j\in\mathbb{Z}$, $\hat{h}(jR)=j^2\hat{h}(R)$;
			\item $\hat{h}(R)=0$ if and only if $R$ is a torsion point.
		\end{enumerate}
		We call $\hat{h}$ the canonical height on $E(K)$. The pairing $\langle\cdot,\cdot\rangle:E(K)\times E(K)\to \mathbb{R}$ defined by $\langle R,S\rangle=\hat{h}(R+S)-\hat{h}(R)-\hat{h}(S)$ is bilinear. For $p>3$, this pairing coincides with the pairing in Proposition \ref{prop:bilinform}.
	\end{proposition}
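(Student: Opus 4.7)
The plan is Tate's classical telescoping construction. The key input is the quasi-parallelogram law
\[ h(R+S) + h(R-S) = 2h(R) + 2h(S) + O_E(1), \]
which in particular yields the duplication estimate $h(2R) = 4h(R) + O_E(1)$. This follows from the fact that the $x$-coordinate of $2R$ is a rational function of degree four in $x(R)$, together with Lemma \ref{lem:heightvsdegree}; alternatively one can derive it from intersection theory on $\Epsilon$.

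Given the duplication estimate, define $\hat{h}(R) = \lim_{n \to \infty} h(2^n R)/(2\cdot 4^n)$. The sequence is Cauchy because successive terms differ by $O_E(1)/4^{n+1}$, so the limit exists and satisfies $\hat{h}(R) = \tfrac{1}{2}h(R) + O_E(1)$, which is property (i); combined with Lemma \ref{lem:heightvsdegree}, this upgrades to $\hat{h}(R) = \deg D_R + O_E(1)$ when $R \neq O$. Property (ii) for $j = 2^n$ is immediate from the definition. To obtain (ii) in general, and the bilinearity of the pairing, apply the quasi-parallelogram law to $2^n R$ and $2^n S$, divide by $2 \cdot 4^n$, and let $n \to \infty$; this yields the \emph{exact} parallelogram identity $\hat{h}(R+S) + \hat{h}(R-S) = 2\hat{h}(R) + 2\hat{h}(S)$, from which symmetry and bilinearity of $\langle R, S\rangle := \hat{h}(R+S) - \hat{h}(R) - \hat{h}(S)$, together with the quadratic homogeneity $\hat{h}(jR) = j^2 \hat{h}(R)$, follow by standard manipulations.

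Property (iii) is the crux. The implication torsion $\Rightarrow \hat{h} = 0$ is immediate from (ii). The converse requires a Northcott-type finiteness statement: if $\hat{h}(R) = 0$, then by (i) the heights $h(nR)$ are uniformly bounded in $n$, and one must conclude that $\{nR : n \in \mathbb{Z}\}$ is finite. This is where the non-isotriviality hypothesis (that $E$ is not $K$-isomorphic to some $E_0/k$) genuinely enters, together with the Lang--Néron theorem giving finite generation of $E(K)$: the two combine to ensure that the set of $K$-rational points of bounded height is finite, and hence that $R$ is torsion. I expect this step to be the main obstacle, as dropping non-isotriviality allows constant-curve contributions to produce infinitely many points of height zero and (iii) genuinely fails.

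It remains to identify the pairing with the one in Proposition \ref{prop:bilinform} when $p > 3$. Both are symmetric bilinear forms on $E(K) \times E(K)$, and by Proposition \ref{prop:bilinform} and property (i), their difference $\delta(R,S)$ is bounded in absolute value by a constant $C$ independent of $R,S$. But then $j^2 \delta(R,S) = \delta(jR, jS)$ is also bounded by $C$ for every $j \in \mathbb{Z}$, which forces $\delta \equiv 0$; hence the two pairings coincide.
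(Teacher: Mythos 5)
Your route---Tate's telescoping limit---is genuinely different from the paper's. For $p>3$ the paper does not construct $\hat{h}$ as a limit at all: it defines $\hat{h}(R)=\tfrac{1}{2}\langle R,R\rangle_1$ directly from the intersection pairing of Proposition \ref{prop:bilinform}, so that bilinearity, (ii), and the agreement of the two pairings are immediate, (i) follows from the constancy of the self-intersection of a section (\cite[Lemma 2.7]{shioda1990mordell}) together with Lemma \ref{lem:heightvsdegree}, and (iii) is quoted from \cite[Theorem 8.4]{shioda1990mordell}. What your approach buys is a construction independent of the elliptic-surface machinery; what it costs is that the two facts the paper gets from that machinery become the load-bearing steps of your argument, and those are exactly where your write-up has gaps.

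First, the quasi-parallelogram law for the naive height over $K$ in characteristic $p>3$ is asserted but not justified: the degree-four rational map only gives the duplication estimate, while the full law needs the divisor-class identity on $E\times E$ plus functoriality of Weil heights for the places of $K$. Note that the paper invokes \cite[Theorem III.4.2]{silverman2013advanced} only when $p=0$ and deliberately re-derives the inequality it needs from the intersection pairing when $p>3$ (Lemma \ref{lem:heightbounds}); the law is true in positive characteristic, but you must prove it or cite a reference valid there. Second, and more seriously, the converse direction of (iii) is the real content of the proposition and you explicitly leave it open. Over an algebraically closed constant field, ``bounded height implies finite'' is emphatically not a Northcott statement---on an isotrivial curve the constant points already give infinitely many points of bounded height---and finite generation of $E(K)$ does not by itself give positive-definiteness of $\hat{h}$ modulo torsion. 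You need the finiteness of bounded-height sets for non-isotrivial $E$ (in characteristic $0$ this is \cite[Theorem III.5.4]{silverman2013advanced}; in characteristic $p$ it is in effect \cite[Theorem 8.4]{shioda1990mordell}, i.e.\ exactly what the paper cites). Finally, a smaller point: boundedness of $\delta(R,S)$ does not follow from Proposition \ref{prop:bilinform} and (i) alone, since expanding $\tfrac{1}{2}\langle R,R\rangle_1$ produces the self-intersection term $-(RR)$; you also need that this is the same constant for every section. With that added, your rigidity argument ($j^2\delta$ bounded forces $\delta\equiv 0$) is correct and is a clean way to identify the two pairings.
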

	\begin{proof}[Proof]
		In the $p=0$ case, this is \cite[Theorem III.4.3]{silverman2013advanced} and Lemma \ref{lem:heightvsdegree}. Suppose $p>3$ and let $\langle\cdot,\cdot\rangle_1$ denote the pairing of Proposition \ref{prop:bilinform}. We define $\hat{h}(R)=\frac{1}{2}\langle R,R\rangle_1$ for all $R\in E(K)$. A direct computation shows that the pairings $\langle\cdot,\cdot\rangle$ and $\langle\cdot,\cdot\rangle_1$ are equal. Since $\langle\cdot,\cdot\rangle_1$ is bilinear, (ii) follows. Additionally, we obtain $\hat{h}(O)=0$, so for (i) we may assume that $R$ is non-zero. Then $(RO)=\deg D_R$, so by combining \cite[Lemma 2.7]{shioda1990mordell} with Proposition \ref{prop:bilinform}, we obtain that $\hat{h}(R)=\deg D_{R}+O_{E}(1)$ and (i) then follows from Lemma \ref{lem:heightvsdegree}. Assertion (iii) is \cite[Theorem 8.4]{shioda1990mordell}.
	\end{proof}
	We end this section with a lemma on height functions.
	\begin{lemma}\label{lem:heightbounds}
		Let $R,S$ be points in $E(K)$, then
		\begin{enumerate}[noitemsep, label=(\roman*)]
			\item there exists a positive constant $C_{R,S}$ that depends only on $R,S$ and $E$ such that $\hat{h}(nR+S)\geq \hat{h}(nR)-nC_{R,S,E};$
			\item $h(R+S)\leq 2h(R)+2h(S)+O_{E}(1).$
		\end{enumerate}
	\end{lemma}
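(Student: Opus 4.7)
The plan is to reduce both statements to properties of the canonical height $\hat{h}$ and its associated bilinear pairing from Proposition \ref{prop:canonheight}, then translate back to $h$ using the comparison $\hat{h}(R) = \tfrac{1}{2}h(R) + O_E(1)$. The essential tool is that since $\langle R,S\rangle := \hat{h}(R+S)-\hat{h}(R)-\hat{h}(S)$ is bilinear and $\hat{h}(jR) = j^2\hat{h}(R)$, the map $\hat{h}$ is a (non-negative) quadratic form on $E(K)$, so all of the standard identities for quadratic forms apply.

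For part (i), I would expand using bilinearity:
\[
\hat{h}(nR+S) = \hat{h}(nR) + \hat{h}(S) + \langle nR,S\rangle = \hat{h}(nR) + \hat{h}(S) + n\langle R,S\rangle.
\]
Since $\hat{h}(S) \geq 0$ by Proposition \ref{prop:canonheight}(i)--(iii), this immediately gives
\[
\hat{h}(nR+S) \geq \hat{h}(nR) + n\langle R,S\rangle \geq \hat{h}(nR) - n\,|\langle R,S\rangle|,
\]
so one may take $C_{R,S,E} := |\langle R,S\rangle|$, which depends only on $R$, $S$, and $E$ as required.

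For part (ii), I would apply the parallelogram identity, which follows from the same bilinearity:
\[
\hat{h}(R+S) + \hat{h}(R-S) = 2\hat{h}(R) + 2\hat{h}(S).
\]
Since $\hat{h}(R-S) \geq 0$, this yields $\hat{h}(R+S) \leq 2\hat{h}(R) + 2\hat{h}(S)$. Now Proposition \ref{prop:canonheight}(i) gives $h(T) = 2\hat{h}(T) + O_E(1)$ for every $T \in E(K)$, so
\[
h(R+S) = 2\hat{h}(R+S) + O_E(1) \leq 4\hat{h}(R) + 4\hat{h}(S) + O_E(1) = 2h(R) + 2h(S) + O_E(1),
\]
which is the desired inequality.

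There is no substantive obstacle here once Proposition \ref{prop:canonheight} is in hand; the only thing to be slightly careful about is that the comparison constants between $h$ and $\hat{h}$ depend only on $E$, so the various $O_E(1)$ terms collected throughout the computation in (ii) indeed combine into a single such error, and that in (i) we use $\hat{h}(S) \geq 0$ (not merely boundedness) to drop that term and obtain a clean linear-in-$n$ lower bound.
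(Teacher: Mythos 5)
Your proof is correct. Part (i) is essentially the paper's argument: both expand $\hat{h}(nR+S)=\hat{h}(nR)+\hat{h}(S)+n\langle R,S\rangle$ via bilinearity and drop $\hat{h}(S)\geq 0$; the paper takes $C_{R,S,E}=\hat{h}(R)+\hat{h}(S)$ where you take $|\langle R,S\rangle|$, an immaterial difference (in either case one should replace the constant by $\max\{\cdot,1\}$ to guarantee positivity, a cosmetic point). Part (ii) is where you genuinely diverge. The paper splits into cases: for $p=0$ it quotes the quasi-parallelogram law for the naive height $h$ from Silverman's Advanced Topics, and for $p>3$ it unwinds the pairing into intersection numbers on the elliptic surface, invoking $(RS)\geq 0$ for distinct irreducible curves (Hartshorne) and $(RS)=O_E(1)$ when $(R)=(S)$ (Shioda), together with the boundedness of $C(R,S,E)$. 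You instead derive the exact parallelogram law $\hat{h}(R+S)+\hat{h}(R-S)=2\hat{h}(R)+2\hat{h}(S)$ formally from the two properties $\hat{h}(jR)=j^2\hat{h}(R)$ and bilinearity of $\langle\cdot,\cdot\rangle$ (applied with $j=-1$ to get $\hat{h}(-S)=\hat{h}(S)$), drop $\hat{h}(R-S)\geq 0$, and transfer to $h$ via $h(T)=2\hat{h}(T)+O_E(1)$. This is uniform in $p$, avoids the intersection-theoretic input entirely, and uses only what Proposition \ref{prop:canonheight} already asserts; the paper's route for $p>3$ is closer to the construction of $\hat{h}$ but is strictly more work for this particular inequality.
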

	\begin{proof}[Proof]
		Let $\langle\cdot,\cdot\rangle$ denote the pairing of Proposition \ref{prop:canonheight}. We have
		\[0=\langle nR,S\rangle-n\langle R,S\rangle=\hat{h}(nR+S)-\hat{h}(nR)-\hat{h}(S)-n\left(\hat{h}(R+S)-\hat{h}(R)-\hat{h}(S)\right).\]
		Since $\hat{h}(T)\geq0$ for all $T\in E(K)$, we then obtain
		\[\hat{h}(nR+S)=\hat{h}(nR)+\hat{h}(S)+n\left(\hat{h}(R+S)-\hat{h}(R)-\hat{h}(S)\right)\geq \hat{h}(nR)-n\left(\hat{h}(R)+\hat{h}(S)\right).\]
		The first assertion then follows by putting $\hat{h}(R)+\hat{h}(S)=C_{R,S,E}$.
		
		For the second assertion, the statement is trivial if either $R$ or $S$ is zero, so assume that both are non-zero. We have in the $p=0$ case by \cite[Theorem III.4.2]{silverman2013advanced} that 
		\[h(R+S)=2h(R)+2h(S)-h(R-S)+O_E(1),\]
		and the result follows since $h(R-S)\geq 0$. If $p>3$, we have by Propositions \ref{prop:bilinform} and \ref{prop:canonheight} that
		\begin{align*}h(R+S)&=2\hat{h}(R+S)+O_E(1)\\&=2\left(\hat{h}(R)+\hat{h}(S)+\langle R,S\rangle\right)+O_E(1)\\&=h(R)+h(S)+2\left(\deg(D_R)+\deg(D_S)-(RS)-C(R,S,E)\right)+O_E(1).\end{align*}
		Since $(R)$ and $(S)$ are irreducible, it follows from \cite[Proposition 1.4]{hartshorne2013algebraic} that if $(R)\neq (S)$, then $(RS)\geq 0$. If $(R)=(S)$, we have by \cite[Lemma 2.7]{shioda1990mordell} that $(RS)=O_E(1)$. Since $C(R,S,E)$ can be bounded independent of $R,S$ (Proposition \ref{prop:bilinform}), it then follows by Lemma \ref{lem:heightvsdegree} that
		\begin{align*}
			h(R+S)\leq h(R)+h(S)+2\left(\deg(D_R)+\deg(D_S)\right)+O_E(1)=2h(R)+2h(S)+O_E(1),
		\end{align*}
		as desired.
	\end{proof}
	\subsection{Values of $D_R$ for specific points $R\in E(K)$}
	Suppose $\gamma\in\mathcal{C}(k)$ is a point. We let $K_\gamma$ denote the completion of $K$ at $\ord_{\gamma}$, and we let $R_\gamma$ denote the corresponding valuation ring with maximal ideal $\M_\gamma$. For $n$ a positive integer, we denote 
	\begin{align}\label{eq:defEKGamN}
		E(K)_{\gamma,n}:=\left\{R\in E(K)\setminus\{O\}:\ord_{\gamma}D_R\geq n\right\}\cup \{O\}.
	\end{align}
	Since $K$ can be embedded in $K_\gamma$, we can view $K$ as a subfield of $K_\gamma$. In particular, we can view $E$ as an elliptic curve over $K_\gamma$. We want to define a similar subset as (\ref{eq:defEKGamN}) for $E(K_\gamma)$, but we have not defined $D_R$ for general $R\in E(K_\gamma)$. We use Lemma \ref{lem:descriptiondiv} for this. For each $\gamma'\in \mathcal{C}(k)$, let $E_{\gamma'}$ be an elliptic curve that is minimal at $\ord_{\gamma'}$ and isomorphic to $E$ over $K$ with isomorphism $\phi_{\gamma'}:E\to E_{\gamma'}$.  One can show that $E_{\gamma'}$ is then also minimal at $\ord_{\gamma'}$ when considered as an elliptic curve over $K_\gamma$. For all non-zero $R\in E(K_\gamma)$, we define \[D_R:=\sum_{\gamma'\in \mathcal{C}(k)}\max\left\{0,-\frac{1}{2}\ord_{\gamma'}(x(\phi_{\gamma'}(R)))\right\}\gamma'.\]
	One can show that $D_R$ is an effective divisor on $\mathcal{C}$. Since $K$ is a subfield of $K_\gamma$, we can view $R\in E(K)$ as a point in $E(K_\gamma)$. By Lemma \ref{lem:descriptiondiv}, it follows that under this identification, above definition extends our earlier definition of $D_R$ for non-zero $R\in E(K)$. We define \[E(K_\gamma)_{\gamma,n}:=\left\{R\in E(K_\gamma)\setminus\{O\}:\ord_\gamma(D_R)\geq n\right\}\cup \{O\}.\] Under this identification, we then have $E(K)_{\gamma,n}\subset E(K_\gamma)_{\gamma,n}$. Using Lemma \ref{lem:descriptiondiv} and the formal group associated to an elliptic curve, one can show that $E(K)_{\gamma,n}$ and $E(K_\gamma)_{\gamma,n}$ are groups, and that one has a group isomorphism $E(K_\gamma)_{\gamma,n}\cong \M_\gamma^n$. See \cite[Chapter IV \& Proposition VII.2.2]{silverman2009arithmetic} for details.
	\begin{lemma}\label{lem:torsionptdiv0}
		Suppose $R$ is an $s$-torsion point in $E(K)$ for some integer $s>1$ that is not divisible by $p$. Then $D_R=0$.
	\end{lemma}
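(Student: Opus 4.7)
The plan is to argue by contradiction using the formal group identification $E(K_\gamma)_{\gamma,1}\cong \M_\gamma$ summarised in the paragraph preceding the lemma. Suppose that $D_R\neq 0$. Then by definition there exists some $\gamma\in \mathcal{C}(k)$ with $\ord_\gamma D_R\geq 1$, which places $R$ in $E(K)_{\gamma,1}$, and hence in $E(K_\gamma)_{\gamma,1}$ via the inclusion $E(K)_{\gamma,1}\subset E(K_\gamma)_{\gamma,1}$ recalled in the text. Since $R$ is assumed to be non-zero ($s>1$), its image under the group isomorphism $E(K_\gamma)_{\gamma,1}\cong \M_\gamma$ is a non-zero element $m\in \M_\gamma$.

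Because the isomorphism is a group isomorphism and $R$ is $s$-torsion, $m$ is an $s$-torsion element of $\M_\gamma$ viewed as an additive group. I would now simply show that $\M_\gamma$ has no non-trivial $s$-torsion. The field $K_\gamma$ has the same characteristic as $k$, namely $p$. Since $s>1$ and $p\nmid s$ (with $p=0$ interpreted as no prime divides $s$), the integer $s$ is non-zero in $K_\gamma$, so multiplication by $s$ is injective on the additive group of $K_\gamma$, and a fortiori on $\M_\gamma$. Therefore $s\cdot m=0$ forces $m=0$, contradicting $m\neq 0$. This contradiction shows $D_R=0$.

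The argument is short and the only real content is the translation to the formal group; the main conceptual point (and the only place one needs to be slightly careful) is the observation that the coprimality of $s$ and $p$ is precisely what guarantees that $\M_\gamma$ contains no non-trivial $s$-torsion, which works uniformly in the cases $p=0$ and $p>0$ because $K_\gamma$ inherits its characteristic from $k$. No delicate estimate or case analysis is required beyond invoking the results cited from \cite{silverman2009arithmetic}.
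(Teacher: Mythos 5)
Your overall strategy---push $R$ into the formal group at a place $\gamma$ of its support and use $p\nmid s$ to rule out torsion there---is the same as the paper's, but there is a genuine gap in the key step. The group isomorphism recalled before the lemma identifies $E(K_\gamma)_{\gamma,1}$ with the \emph{formal group} $\hat{E}(\M_\gamma)$: the underlying set is $\M_\gamma$, but the group law is the formal group law $F(z_1,z_2)=z_1+z_2+(\text{higher order terms})$, not ordinary addition. (If the identification were literally with $(\M_\gamma,+)$ in characteristic $p>0$, every element of $E(K_\gamma)_{\gamma,1}$ would be killed by $p$, which is absurd since suitable non-torsion points of $E(K)$ land in this subgroup.) Consequently ``$m$ is an $s$-torsion element of $\M_\gamma$ viewed as an additive group'' is not what you actually know; what you know is $[s](m)=0$, where $[s](T)=sT+O(T^2)$ is the multiplication-by-$s$ power series of the formal group, and the injectivity of field multiplication by $s$ on $K_\gamma$ does not directly apply to this map.

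The conclusion you want is nonetheless true, and the repair is short: since $p\nmid s$, the linear coefficient $s$ of $[s](T)$ is a unit in $R_\gamma$, so for $m\neq 0$ one has $\ord_\gamma([s](m))=\ord_\gamma(sm)=\ord_\gamma(m)<\infty$, whence $[s](m)\neq 0$. Equivalently, every torsion element of a formal group over a local ring of residue characteristic $p$ has $p$-power order, see \cite[Chapter IV]{silverman2009arithmetic}. This leading-term computation is exactly what the paper's proof carries out by passing to the quotient $E(K_\gamma)_{\gamma,d}/E(K_\gamma)_{\gamma,d+1}\cong\M_\gamma^d/\M_\gamma^{d+1}\cong k$, where the formal group law genuinely reduces to addition and multiplication by $s$ is visibly injective. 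With that adjustment your argument matches the paper's.
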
 
	\begin{proof}[Proof]
		Suppose $\gamma\in \supp D_R$ and view $R$ as a point of $E(K_\gamma)$. Denote $d:=\ord_{\gamma}D_R>0$, then it follows from the discussion preceding this lemma that $R\in E(K_\gamma)_{\gamma,d}$. Let $[R]$ denote the image of $R$ in the quotient $E(K_\gamma)_{\gamma,d}/E(K_\gamma)_{\gamma,d+1}$, then $[R]$ is non-zero. By the discussion preceding this lemma, we have
		\[E(K_\gamma)_{\gamma,d}/E(K_\gamma)_{\gamma,d+1}\cong\M_\gamma^d/\M_\gamma^{d+1}\cong k.\]
		Since $p$ does not divide $s$, it then follows that $s[R]\neq O$, but this contradicts $s[R]=[sR]=[O]$. 
	\end{proof}
	Suppose $R$ is a non-torsion point of $E(K)$ and let $n$ be a positive integer. If $\gamma\in\supp D_R$, it is possible to relate the values $\ord_{\gamma}D_{nR}$ and $\ord_{\gamma}D_R$ through the formal group associated to an elliptic curve. This relation is much simpler in the $p=0$ case, so we will focus on the $p>0$ case. 
	
	Suppose $p>0$. We first require some notation. For each point $\gamma\in \mathcal{C}(k)$, let $E_\gamma$ denote an elliptic curve given by a Weierstrass equation that is minimal at $\ord_{\gamma}$ and isomorphic to $E$ over $K$. The following is from \cite[Chapter IV]{silverman2009arithmetic}. Fix some $\gamma\in \mathcal{C}(k)$, and let $\hat{E}_\gamma$ denote the formal group associated to $E_\gamma$. Then the multiplication-by-$p$ map $[p]:\hat{E}_\gamma\to\hat{E}_\gamma$ is defined by the formal power series $T\mapsto H_{E_\gamma}T^p+a_2T^{2p}+\ldots$. Since $E$ is ordinary, we have $H_{E_\gamma}\neq 0$, and since $E_\gamma$ is minimal at $\ord_{\gamma}$, we have $\ord_{\gamma}H_{E_\gamma}\geq 0$. The value $\ord_{\gamma}H_{E_\gamma}$ does not depend on the chosen $E_\gamma$. For each point $\gamma\in \mathcal{C}(k)$, we define \[h_{E,\gamma}:=\ord_{\gamma}H_{E_\gamma}.\] 
	We have $h_{E,\gamma}=\ord_{\gamma}H_E$ for $\gamma\in \mathcal{C}(k)$ outside the finite set of $\gamma'\in \mathcal{C}(k)$ for which $E$ is not minimal at $\ord_{\gamma'}$. Since $H_E$ has only finitely many zeroes, it follows that there are only finitely many points $\gamma\in \mathcal{C}(k)$ for which $h_{E,\gamma}\neq 0.$ 
	
	We next provide the proposition that relates $\ord_\gamma D_{nR}$ to $\ord_{\gamma}D_R$. The $p=0$ part is due to Ingram et al. \cite{ingram2012algebraic} and the $p>0$ part is due to Naskr{\k{e}}cki \cite{naskrkecki2016divisibility}. 
	\begin{proposition}[\hspace{1sp}{\cite[Lemma 5.6]{ingram2012algebraic} \& \cite[Lemma 8.2]{naskrkecki2016divisibility}}]\label{prop:valofmultiples}
		Suppose $R$ is a non-torsion point of $E(K)$ and $\gamma\in\supp D_{mR}$ for some positive integer $m$. Denote $m(\gamma):=\min\{n\geq1:\gamma\in\supp D_{nR} \}$ and let $n$ be a positive integer, then, 
		\begin{enumerate}[noitemsep, label=(\roman*)]
			\item if $m(\gamma)\nmid n$, $\ord_{\gamma}D_{nR}=0$;
			\item if $m(\gamma)\mid n$ and $p=0$, $\ord_{\gamma}D_{nR}=\ord_{\gamma}D_{m(\gamma)R}$;
			\item if $m(\gamma)\mid n$ and $p>0$, denote $e:=\ord_p\left(\frac{n}{m(\gamma)}\right)$. Then,
			\begin{enumerate}[noitemsep, label=(\alph*)]
				\item if $h_{E,\gamma}\leq p-1$, then $\ord_{\gamma}D_{nP}=p^e\ord_{\gamma}D_{m(\gamma)P}+\frac{p^e-1}{p-1}h_{E,\gamma}$;
				\item if $h_{E,\gamma}\geq p$, there exists an integer $j$, independent of $\gamma$ and depending only on $E$, and a function $\delta_{\gamma,m(\gamma)R}:\{0,1,\ldots,j\}\to \mathbb{Z}_{\geq 0}$, depending only on $\gamma,R$ and $E$, satisfying $\delta_{\gamma,m(\gamma)R}(0)=0$ and such that
				\[\ord_{\gamma}D_{nR}=\begin{cases}
					p^e\ord_{\gamma}D_{m(\gamma)R}+\delta_{\gamma,m(\gamma)R}(e), &\text{ if }e\leq j,\\
					p^e\ord_{\gamma}D_{m(\gamma)R}+\frac{p^{e-j}-1}{p-1}h_{E,\gamma}+p^{e-j}\delta_{\gamma,m(\gamma)R}(j), &\text{ otherwise}.
				\end{cases}\]
			\end{enumerate} 
		\end{enumerate}
	\end{proposition}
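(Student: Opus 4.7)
The plan is to work locally at $\gamma$ using the formal group isomorphism $E(K_\gamma)_{\gamma, n} \cong \M_\gamma^n$ recalled before Lemma \ref{lem:torsionptdiv0}. Note that for nonzero $S \in E(K_\gamma)$, $\ord_\gamma D_S$ equals the largest $n$ with $S \in E(K_\gamma)_{\gamma, n}$. For part (i), the set $\{n \in \mathbb{Z} : nR \in E(K_\gamma)_{\gamma,1}\}$ is a subgroup of $\mathbb{Z}$ whose positive part is generated by $m(\gamma)$; hence $\gamma \in \supp D_{nR}$ forces $m(\gamma) \mid n$, so $\ord_\gamma D_{nR} = 0$ otherwise.

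For the remaining parts, suppose $m(\gamma) \mid n$ and write $n = m(\gamma) k$. Set $S := m(\gamma) R$, let $d := \ord_\gamma D_S$, and let $t \in \M_\gamma^d \setminus \M_\gamma^{d+1}$ be the image of $S$ under the formal group isomorphism, so that $nR = kS$ corresponds to $[k](t)$. Decompose $k = p^e k'$ with $\gcd(k', p) = 1$, taking $e = 0$ and $k' = k$ if $p = 0$. Since $[k'](T) = k'T + O(T^2)$ with $k' \in R_\gamma^\times$, the $\ord_\gamma$ is preserved by $[k']$, so it suffices to compute $\ord_\gamma [p^e](t)$. In the $p = 0$ case we have $e = 0$ and $\ord_\gamma [k](t) = d$, establishing (ii).

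For $p > 0$, we iterate using the expansion $[p](T) = H_{E_\gamma} T^p + a_2 T^{2p} + \ldots$ over $R_\gamma$ recalled before the proposition. If $u \in \M_\gamma^c \setminus \M_\gamma^{c+1}$ and $h := h_{E,\gamma}$ satisfies $h < pc$, then $\ord_\gamma(H_{E_\gamma} u^p) = h + pc$ is strictly smaller than $\ord_\gamma(a_i u^{ip}) \geq ipc \geq 2pc$ for $i \geq 2$, so $\ord_\gamma [p](u) = h + pc$. Writing $d_j := \ord_\gamma [p^j](t)$, induction under the running hypothesis $h < pd_j$ yields
\[d_e = p^e d + h \cdot \frac{p^e - 1}{p - 1}.\]
When $h \leq p - 1$, this hypothesis holds for every $j \geq 0$ since $d_j \geq d \geq 1$, proving case (iii)(a).

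For (iii)(b) with $h \geq p$, the hypothesis $h < pd_j$ may fail initially, but the crude estimate $d_{j+1} \geq \min(h + pd_j,\, 2pd_j) \geq pd_j$ forces $d_j \geq p^j$. Choose $j$ to be the smallest nonnegative integer with $p^{j+1} > \max_{\gamma'} h_{E,\gamma'}$, where the maximum runs over the finite set where $h_{E,\gamma'} > 0$; then $j$ depends only on $E$. Set $\delta_{\gamma, S}(i) := d_i - p^i d$ for $0 \leq i \leq j$, so that $\delta_{\gamma, S}(0) = 0$ and each value depends only on $\gamma$, $R$, and $E$. For $e \leq j$ the stated formula holds by definition of $\delta$, and for $e > j$ the clean iteration applies from step $j$; substituting $d_j = p^j d + \delta_{\gamma, S}(j)$ into $d_e = p^{e-j} d_j + h \frac{p^{e-j}-1}{p-1}$ yields the stated expression. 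The main subtlety is that $j$ can be chosen independently of $\gamma$, which rests on the finiteness of $\{\gamma' : h_{E,\gamma'} > 0\}$ noted before the proposition.
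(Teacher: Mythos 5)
The paper gives no proof of this proposition---it is quoted verbatim from \cite[Lemma 5.6]{ingram2012algebraic} and \cite[Lemma 8.2]{naskrkecki2016divisibility}---and your formal-group reconstruction is correct and is essentially the argument of those references: the subgroup/filtration argument for (i)--(ii), the valuation bookkeeping for $[p](T)=H_{E_\gamma}T^p+a_2T^{2p}+\cdots$ giving the recursion $d_{i+1}=h+pd_i$ once $h<pd_i$, and the uniform choice of $j$ via the finiteness of $\{\gamma':h_{E,\gamma'}>0\}$. I see no gaps; in particular your observation that possible cancellation only increases valuations, so that $d_i\geq p^i d$ and hence $\delta_{\gamma,m(\gamma)R}(i)\geq 0$, correctly secures that $\delta$ lands in $\mathbb{Z}_{\geq 0}$ as the statement requires.
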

	%Suppose $p>0$ and let $n$ be a positive integer. Suppose $R\in E(K)$ is a non-torsion point and $\gamma\in\supp D_{nR}$. Let $m(\gamma)$ be the smallest positive integer for which $\gamma\in\supp D_{m(\gamma)R}$ and suppose $h_{E,\gamma}\geq p$. If we apply above proposition on both $m(\gamma)R$ and $nR$, we obtain two functions $\delta_{\gamma,m(\gamma)R}$ and $\delta_{\gamma, nR}$ that are related in the following way. Let $\ell$ be a positive integer, and denote $e=\ord_p(n/m(\gamma))$ and $s=\ord_p(\ell)$. Let $j$ be as in Proposition \ref{prop:valofmultiples} and assume $e,s\leq j$, then
	%\begin{align}\label{eq:relationDeltaGammas}
		%p^{s}\delta_{\gamma,m(\gamma)R}(e)+\delta_{\gamma,nR}(s)=\begin{cases}
		%\delta_{\gamma,m(\gamma)R}(e+s), &\text{ if }e+s\leq j,\\
		%\frac{p^{e+s-j}-1}{p-1}h_{E,\gamma}+p^{e+s-j}\delta_{\gamma,m(\gamma)R}(j), &\text{ otherwise.}
	%\end{cases}
	%\end{align}
	%\item if $e_1>j$, then $p^{e_2}\left(\frac{p^{e_1-j}-1}{p-1}h_{E,\gamma}+p^{e_1-j}\delta_{\gamma,R}(j)\right)+\delta_{\gamma, nR}(e_2)=\frac{p^{e_1+e_2-j}-1}{p-1}h_{E,\gamma}+p^{e_1+e_2-j}\delta_{\gamma,R}(j)$.	
	\begin{lemma}\label{lem:approxdelta}
		Suppose $p>0$. Suppose $R\in E(K)$ is a non-torsion point and let $n$ be a positive integer. Suppose $\gamma\in\supp D_{nR}$ with $h_{E,\gamma}\geq p$ and let $j$ be as in Proposition \ref{prop:valofmultiples}. Let $m(\gamma)$ be the smallest positive integer such that $\gamma\in\supp D_{m(\gamma)R}$ and denote $e=\ord_p(n/m(\gamma))$. Then for any non-negative integer $s\leq j$, we have \begin{align*}\delta_{\gamma, nR}(s)&=-p^{s}\delta_{\gamma, m(\gamma)R}(e)+\begin{cases}
				\delta_{\gamma,m(\gamma)R}(e+s), &\text{ if }e+s\leq j,\\
				\frac{p^{e+s-j}-1}{p-1}h_{E,\gamma}+p^{e+s-j}\delta_{\gamma,m(\gamma)R}(j), &\text{ otherwise,}
			\end{cases}\\&= O_{E,R}(n).\end{align*}
	\end{lemma}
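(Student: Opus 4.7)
The approach is to apply Proposition \ref{prop:valofmultiples}(iii)(b) twice: once taking $R$ as the base point of the multiplicative sequence, and once taking $nR$ in that role, and then compare the two resulting expressions for $\ord_\gamma D_{p^s\cdot nR}$.

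First, since $R$ is non-torsion so is $nR$, and the hypothesis $\gamma\in\supp D_{nR}$ says precisely that in the sequence $\{t\cdot nR\}_{t\geq 1}$ the smallest positive integer $t$ with $\gamma\in\supp D_{t\cdot nR}$ is $t=1$. The integer $j$ in Proposition \ref{prop:valofmultiples}(iii)(b) depends only on $E$, so it is the same for either choice of base point. Applying (iii)(b) to base point $nR$ with multiplier $p^s$, and using that $s\leq j$, then yields the defining relation
\[
\ord_\gamma D_{p^s\cdot nR}=p^s\ord_\gamma D_{nR}+\delta_{\gamma,nR}(s).
\]

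Next I apply (iii)(b) with base point $R$. For $\ell\geq 0$, write $X_\ell:=\delta_{\gamma,m(\gamma)R}(\ell)$ when $\ell\leq j$ and $X_\ell:=\frac{p^{\ell-j}-1}{p-1}h_{E,\gamma}+p^{\ell-j}\delta_{\gamma,m(\gamma)R}(j)$ when $\ell>j$. The proposition then reads
\[
\ord_\gamma D_{p^\ell m(\gamma)R}=p^\ell\ord_\gamma D_{m(\gamma)R}+X_\ell.
\]
Taking $\ell=e$ expresses $\ord_\gamma D_{nR}$, and taking $\ell=e+s$ expresses $\ord_\gamma D_{p^s nR}$. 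Substituting both into the previous display, the leading $p^{e+s}\ord_\gamma D_{m(\gamma)R}$ terms cancel and I obtain
\[
\delta_{\gamma,nR}(s)=X_{e+s}-p^s X_e.
\]
Since $s\leq j$, splitting on whether $e+s\leq j$ or $e+s>j$ reproduces exactly the two branches in the statement.

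For the estimate $\delta_{\gamma,nR}(s)=O_{E,R}(n)$, I use the following bounds. The quantity $h_{E,\gamma}$ is bounded in terms of $E$ alone, since $h_{E,\gamma}=0$ for all but finitely many $\gamma$ by the discussion preceding the lemma. The function $\delta_{\gamma,m(\gamma)R}$ takes non-negative integer values on the finite set $\{0,1,\ldots,j\}$, and only finitely many $\gamma$ satisfy $h_{E,\gamma}\geq p$, so these values are $O_{E,R}(1)$. Finally, $p^e\mid n/m(\gamma)$ gives $p^e\leq n$, which together with $p^s\leq p^j=O_E(1)$ yields $p^{e+s-j}\leq n$. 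Combining, every term on the right-hand side is at most $O_{E,R}(n)$, and the same bound holds for $\delta_{\gamma,nR}(s)$.

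I do not expect a serious obstacle. The main point of care is the case bookkeeping, and in particular that when $e>j$ the symbol $\delta_{\gamma,m(\gamma)R}(e)$ appearing in the statement must be interpreted via the extension $X_\ell$ above, as dictated by Proposition \ref{prop:valofmultiples}(iii)(b) itself.
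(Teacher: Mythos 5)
Your proposal is correct and follows essentially the same route as the paper: the first equality is obtained by evaluating $\ord_\gamma D_{p^s nR}$ via Proposition \ref{prop:valofmultiples}(iii)(b) with both $nR$ and $m(\gamma)R$ as initial points and cancelling, and the $O_{E,R}(n)$ bound comes from uniform bounds on $h_{E,\gamma}$ and on the finitely many relevant values $\delta_{\gamma',m(\gamma')R}(t)$ together with $p^e\leq n$. Your remark that $\delta_{\gamma,m(\gamma)R}(e)$ must be read via the extended expression when $e>j$ is a fair point of care that the paper leaves implicit.
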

	\begin{proof}[Proof]
		Fix some non-negative integer $s\leq j$. The first equality follows by applying Proposition \ref{prop:valofmultiples} on $np^sR$ for both $m(\gamma)R$ and $nR$ as initial point. There are only finitely many $\gamma'\in \mathcal{C}(k)$ for which $h_{E,\gamma'}\neq0$, so $C_1:=\max\{h_{E,\gamma'}:\gamma'\in \mathcal{C}(k)\}$ exists and depends only on $E$. Let $S$ denote the finite set of $\gamma'\in \mathcal{C}(k)$ for which $h_{E,\gamma'}\geq p$ and for which $\gamma'\in\supp D_{mR}$ for some positive integer $m$. Given $\gamma'\in S$, we let $m(\gamma')$ denote the smallest positive integer for which $\gamma'\in\supp D_{m(\gamma')R}.$ The constant $C_{2}:=\max\{\delta_{\gamma',m(\gamma')R}(t):0\leq t\leq j,\gamma'\in S\}$ then exists and only depends on $E$ and $R$. So the constant $C:=2\max\{C_1,C_{2}\}$ depends only on $E$ and $R$. Since $\delta_{\gamma, nR}(s)$ and $p^s\delta_{\gamma, m(\gamma)R}(e)$ are non-negative, it suffices for the second equality to show that $\delta_{\gamma, nR}(s)+p^s\delta_{\gamma, m(\gamma)R}(e)=O_{E,R}(n)$ and by the first equality we have $\delta_{\gamma, nR}(s)+p^s\delta_{\gamma, m(\gamma)R}(e)\leq p^{e}C\leq nC=O_{E,R}(n)$.
	\end{proof}
	\begin{corollary}\label{cor:ApproxValDnr}
		Suppose $R\in E(K)$ is a non-torsion point and let $\gamma\in C(k)$. For each positive integer $n$, we have $\ord_{\gamma}D_{nR}=O_{E,R,\gamma}(n).$ 
	\end{corollary}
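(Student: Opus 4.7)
The proof should be a routine case analysis using Proposition \ref{prop:valofmultiples}, together with Lemma \ref{lem:approxdelta} to control the error terms in the $p>0$ regime. The plan is to fix $E$, $R$, and $\gamma$, and then show the bound directly from the formulas in the proposition. The key observation is that the exponent $e=\ord_p(n/m(\gamma))$ controls the growth, and since $p^e\leq n/m(\gamma)\leq n$, every piece of the formula is already at most linear in $n$ once the constants absorbed into the $O_{E,R,\gamma}$ are allowed to depend on $\gamma$.

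First I would dispose of the trivial cases. If $\gamma\notin \supp D_{mR}$ for any positive integer $m$, then $\ord_{\gamma}D_{nR}=0$ identically, so the statement holds. Otherwise let $m(\gamma)$ be the smallest positive integer with $\gamma\in\supp D_{m(\gamma)R}$. By part (i) of Proposition \ref{prop:valofmultiples}, whenever $m(\gamma)\nmid n$ we have $\ord_{\gamma}D_{nR}=0$. So it suffices to treat the case $m(\gamma)\mid n$.

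In characteristic $0$, part (ii) of the proposition gives $\ord_\gamma D_{nR}=\ord_\gamma D_{m(\gamma)R}$, which depends only on $E$, $R$, $\gamma$, and is therefore $O_{E,R,\gamma}(1)\subset O_{E,R,\gamma}(n)$. In characteristic $p>0$, write $e=\ord_p(n/m(\gamma))$, so that $p^e\leq n/m(\gamma)\leq n$. In sub-case (iii)(a), the formula gives
\[\ord_{\gamma}D_{nR}=p^{e}\ord_{\gamma}D_{m(\gamma)R}+\tfrac{p^{e}-1}{p-1}h_{E,\gamma}\leq p^{e}\bigl(\ord_{\gamma}D_{m(\gamma)R}+h_{E,\gamma}\bigr),\]
and since the parenthesised factor depends only on $E,R,\gamma$, this is $O_{E,R,\gamma}(n)$. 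In sub-case (iii)(b), one splits further on whether $e\leq j$ or $e>j$. For $e\leq j$, the entire term $\delta_{\gamma,m(\gamma)R}(e)$ is one of finitely many non-negative integers depending only on $E,R,\gamma$, so $\ord_\gamma D_{nR}\leq p^{e}\ord_\gamma D_{m(\gamma)R}+C_{E,R,\gamma}=O_{E,R,\gamma}(n)$. For $e>j$, the formula reads $p^e\ord_\gamma D_{m(\gamma)R}+\frac{p^{e-j}-1}{p-1}h_{E,\gamma}+p^{e-j}\delta_{\gamma,m(\gamma)R}(j)$, and each summand is bounded by a constant (depending on $E,R,\gamma$) times $p^e\leq n$.

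I do not expect any serious obstacle: Lemma \ref{lem:approxdelta} already establishes the required $O_{E,R}(n)$ bound for the genuinely subtle quantities $\delta_{\gamma,nR}(s)$, and the present corollary is essentially an immediate consequence of Proposition \ref{prop:valofmultiples} once one allows the implicit constants to depend on $\gamma$. The only mildly delicate point is keeping careful track of which objects contribute to the constants in the $O$-notation; it is critical that $m(\gamma)$, $\ord_\gamma D_{m(\gamma)R}$, $h_{E,\gamma}$, and the finitely many values of $\delta_{\gamma,m(\gamma)R}$ are all intrinsic to the triple $(E,R,\gamma)$ and hence may be absorbed into $O_{E,R,\gamma}$.
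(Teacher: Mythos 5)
Your proposal is correct and follows essentially the same route as the paper: reduce to the case $m(\gamma)\mid n$, apply Proposition \ref{prop:valofmultiples} to write $\ord_\gamma D_{nR}$ as $p^e\ord_\gamma D_{m(\gamma)R}$ plus error terms, and absorb the quantities intrinsic to $(E,R,\gamma)$ into the implied constant using $p^e\leq n$. The paper merely compresses your explicit case analysis of (iii)(a) and (iii)(b) by citing the constants extracted in the proof of Lemma \ref{lem:approxdelta}.
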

	\begin{proof}[Proof]
		We may assume that $\ord_{\gamma}D_{nR}>0$. Let $m(\gamma)$ be the smallest positive integer such that $\gamma\in\supp D_{m(\gamma)R}$. Denote $e=\ord_p(n/m(\gamma))$ if $p>0$ and $e=0$ if $p=0$. By Proposition \ref{prop:valofmultiples} and (the proof of) Lemma \ref{lem:approxdelta}, we obtain that
		\[\ord_{\gamma}D_{nR}- p^e\ord_{\gamma} D_{m(\gamma)R}=p^eO_{E,R}(m(\gamma))\leq nO_{E,R}(m(\gamma))=O_{E,R,\gamma}(n).\]
		Since $\ord_{\gamma}D_{m(\gamma)R}$ depends only on $E,R$ and $\gamma$, we have $p^e\ord_{\gamma} D_{m(\gamma)R}=O_{E,R,\gamma}(n)$, from which the result then follows.
	\end{proof}
	\section{Proof of Theorem \ref{thm:main}}\label{sec:prfmain}
	If $r=1$, then the proof is due to Ingram et al. if $p=0$ \cite[Theorem 1.7]{ingram2012algebraic} and due to Naskr{\k{e}}cki if $p>3$ \cite[Theorem 8.11]{naskrkecki2016divisibility}. So we may assume $r>1$.  We denote $S:=\bigcup_{b\mid r, b<r}\supp D_{bQ},$ then $S$ is finite. Moreover, if $p$ does not divide $r$, then $S$ is empty by Lemma \ref{lem:torsionptdiv0}. The next proposition and corollary are key ingredients of the proof. These are direct extensions of the analogous results of Verzobio in the number field case, see \cite{verzobio2020primitive}.
	\begin{proposition}\label{prop:gammainsupprndaswell}
		Let $n$ be a positive integer and suppose $D_{nP+Q}$ does not have a primitive divisor. Suppose $\gamma$ lies in the support of $D_{nP+Q}$ and does not lie in $S$. Then there exists a positive integer $d>r$ that divides $n$ and is coprime with $r$ such that $\gamma$ lies in the support of $D_{\frac{rn}{d}P}$ as well.
	\end{proposition}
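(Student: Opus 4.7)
The plan is to work in the quotient group $G_\gamma := E(K)/E(K)_{\gamma,1}$ and exploit the translation that for $R\in E(K)\setminus\{O\}$, one has $\gamma\in \supp D_R$ if and only if $R\in E(K)_{\gamma,1}$. Because $D_{nP+Q}$ has no primitive divisor and $\gamma\in \supp D_{nP+Q}$, there is some $1\leq m<n$ with $\gamma\in \supp D_{mP+Q}$; let $m_0$ be the smallest such positive integer. Writing $\bar R$ for the image of $R\in E(K)$ in $G_\gamma$, the conditions $\gamma\in \supp D_{nP+Q}$ and $\gamma\in \supp D_{m_0 P+Q}$ become $n\bar P=m_0\bar P=-\bar Q$ in $G_\gamma$, and in particular $(n-m_0)\bar P=O$.

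Next, I would use the hypothesis $\gamma\notin S$ to pin down the order of $\bar Q$. For every proper divisor $b$ of $r$, the point $bQ$ has order $r/b>1$, so $bQ\neq O$; since $\gamma\notin \supp D_{bQ}$, we get $b\bar Q\neq O$, and hence $\bar Q$ has order exactly $r$ in $G_\gamma$. Writing $M$ for the order of $\bar P$ in $G_\gamma$ (finite, since $rn\bar P=-r\bar Q=O$), the relation $-\bar Q=n\bar P\in\langle\bar P\rangle$ together with $\bar Q$ having order $r$ forces $r\mid M$; write $M=rM'$. Moreover, the order of $n\bar P$ inside the cyclic group $\langle\bar P\rangle$ equals $M/\gcd(M,n)=r$, so $\gcd(M,n)=M'$. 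Writing $n=M'n'$, this forces both $M'\mid n$ and $\gcd(n',r)=1$.

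I would then take $d:=n'$ and verify the four required properties. Divisibility $d\mid n$ is immediate from $n=M'n'$, and $\gcd(d,r)=1$ was just shown. For $d>r$, the relation $(n-m_0)\bar P=O$ and $m_0\geq 1$ give $n\geq m_0+M>M=rM'$, so $n'>r$. Finally, $rn/d=rM'=M$, and since $P$ is non-torsion we have $MP\neq O$; by the definition of $M$, $MP\in E(K)_{\gamma,1}$, hence $\gamma\in \supp D_{MP}=\supp D_{(rn/d)P}$, as desired.

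The main conceptual step is realising that $\gamma\notin S$ forces the order of $\bar Q$ in $G_\gamma$ to be exactly $r$, after which the entire argument reduces to elementary arithmetic inside the cyclic subgroup $\langle\bar P\rangle\subseteq G_\gamma$. The step most prone to slipping is the strict inequality $d>r$, which genuinely uses that $m_0<n$ (a consequence of $D_{nP+Q}$ having no primitive divisor at $\gamma$) to guarantee that the period $M$ appears in $n$ with multiplicity strictly greater than $\gcd(M,n)=M'$.
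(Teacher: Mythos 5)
Your proof is correct and follows essentially the same route as the paper: both arguments rest on the group structure of $E(K)_{\gamma,1}$, the identity $r(nP+Q)=rnP$, and the fact that $\gamma\notin S$ rules out $bQ\in E(K)_{\gamma,1}$ for every proper divisor $b$ of $r$ (your "order of $\bar Q$ is exactly $r$"). The only cosmetic difference is that you package the gcd/B\'ezout manipulation of the paper as an order computation in the quotient $E(K)/E(K)_{\gamma,1}$, taking $rn/d$ to be the exact period $M$ of $\bar P$ rather than $\gcd(rn,j)$ for some witness $j$; this yields a canonical (maximal) choice of $d$, which is equally valid.
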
 
	\begin{proof}[Proof]
		Since $D_{nP+Q}$ does not have a primitive divisor, there exists an integer $1\leq j< n$ such that $\gamma\in\supp D_{(n-j)P+Q}$. So both $nP+Q$ and $(n-j)P+Q$ are elements of $E(K)_{\gamma,1}$ and since $E(K)_{\gamma,1}$ is a group, we then have $jP\in E(K)_{\gamma,1}$. Similarly, we have that $r(nP+Q)=rnP\in E(K)_{\gamma,1}$, so for $s=\gcd(rn,j)$, we have $sP\in E(K)_{\gamma,1}$. Write $s=\frac{rn}{d}$ for some positive integer $d$ and denote $c=\gcd(r,d)$. Now write $r=r_1c$ and $d=d_1c$, then $s=\frac{r_1n}{d_1}$. Now if $c>1$, then $r_1Q=r_1(nP+Q)-d_1sP\in E(K)_{\gamma,1}$, which contradicts that $\gamma\notin S$ since $r_1\mid r$ and $r_1<r$. So $c=1$ and $d$ is coprime with $r$. Since $d$ divides $rn$, it then follows that $d$ divides $n$. Since $s$ divides $j$ and $j<n$, we have $d>r$. Since $\gamma\in\supp D_{sP}$ and $s=\frac{rn}{d}$, the proposition is proved.
	\end{proof}
	To improve readability, we write $h_\gamma(R):=\ord_{\gamma}D_R$ for a non-zero point $R\in E(K)$ and $\gamma\in \mathcal{C}(k)$.
	\begin{corollary}\label{cor:boundbyassociatedd}
		Assume the same hypotheses as in the preceding proposition. Let $d$ be the positive integer obtained from that proposition. If $p=0$, put $e=0$ and if $p>0$, put $e=\ord_p(d)$. There then exist non-negative integers $b<r$ and $\epsilon_{d,\gamma}$, where $b$ depends only on $d$ and $r$, such that \[h_\gamma(nP+Q)\leq p^eh_\gamma\left(\frac{n}{d}P+bQ\right)+\epsilon_{d,\gamma}.\]
		Moreover, let $j$ and $\delta_{\gamma,\frac{rn}{d}P}$ be as in Proposition \ref{prop:valofmultiples}(iii), then
		\[\epsilon_{d,\gamma}=\begin{cases}
			\frac{p^e-1}{p-1}h_{E,\gamma},\ &\text{ if }h_{E,\gamma} <p,\\
			\delta_{\gamma,\frac{rn}{d}P}(e), &\text{ if }h_{E,\gamma}\geq p \text{ and }e\leq j,\\
			\frac{p^{e-j}-1}{p-1}h_{E,\gamma}+p^{e-j}\delta_{\gamma,\frac{rn}{d}P}(j), &\text{ otherwise}.
		\end{cases}\]
	\end{corollary}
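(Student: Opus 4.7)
The plan is to introduce the auxiliary point $T := \frac{n}{d}P + bQ$, where $b \in \{0,1,\ldots,r-1\}$ is the unique residue with $bd \equiv 1 \pmod r$; such a $b$ exists and depends only on $d$ and $r$ because $\gcd(d,r)=1$ by Proposition \ref{prop:gammainsupprndaswell}. The point of this choice is that $rQ=O$ and $(bd-1)Q = O$, so that $dT = nP+Q$ and $rT = \frac{rn}{d}P$. Moreover $T$ is non-torsion, since $P$ is non-torsion while $Q$ is torsion.

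First I would verify that $\gamma \in \supp D_T$. Let $m(\gamma,T)$ denote the smallest positive integer $k$ with $\gamma \in \supp D_{kT}$. Since $\gamma\in\supp D_{nP+Q}=\supp D_{dT}$, Proposition \ref{prop:valofmultiples}(i) (applied to the non-torsion point $T$) forces $m(\gamma,T)\mid d$; and since $\gamma\in\supp D_{\frac{rn}{d}P}=\supp D_{rT}$ by Proposition \ref{prop:gammainsupprndaswell}, the same result forces $m(\gamma,T)\mid r$. Coprimality of $d$ and $r$ then gives $m(\gamma,T)=1$.

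With $m(\gamma,T)=1$ in hand, the core step is to apply Proposition \ref{prop:valofmultiples} directly to the non-torsion point $T$ with multiplier $d$; the associated exponent of the proposition becomes $\ord_p(d)$, matching the $e$ of the corollary. This produces an equality of precisely the shape claimed, with the three alternatives corresponding respectively to the cases $p=0$ or $h_{E,\gamma}<p$ (Proposition \ref{prop:valofmultiples}(ii) or (iii)(a)), $h_{E,\gamma}\geq p$ with $e\leq j$, and $h_{E,\gamma}\geq p$ with $e>j$ (the two branches of Proposition \ref{prop:valofmultiples}(iii)(b)). However, the correction term emerges naturally as $\delta_{\gamma,T}$ rather than as $\delta_{\gamma,\frac{rn}{d}P}=\delta_{\gamma,rT}$.

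The main, and essentially only, remaining point is to reconcile these two $\delta$-functions. If $p\nmid r$, then Lemma \ref{lem:approxdelta} applied to the base point $T$ with multiplier $r$ has exponent $\ord_p(r)=0$, and the first equality of the lemma reduces to $\delta_{\gamma,rT}(s) = -p^s\delta_{\gamma,T}(0) + \delta_{\gamma,T}(s) = \delta_{\gamma,T}(s)$ for every $s \le j$, since $\delta_{\gamma,T}(0)=0$. If instead $p\mid r$, then $\gcd(d,r)=1$ forces $e=\ord_p(d)=0$, so only $\delta(0)=0$ enters the formula and the identification is automatic. I expect this bookkeeping around the $\delta$-notation, rather than any deep calculation, to be the main nuisance in writing out the proof in full.
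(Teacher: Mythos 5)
Your proposal is correct, and it reaches the conclusion by a genuinely different mechanism than the paper. The paper works with $P_1=\frac{rn}{d}P$ and $P_2=nP+Q$: it picks $a,c$ with $ar+c(d-r)=1$, writes $\frac{n}{d}P+cQ=(a-c)P_1+cP_2$, and uses the group structure of the filtration subgroups $E(K)_{\gamma,t}$ to deduce $h_\gamma\left(\frac{n}{d}P+cQ\right)\geq h_\gamma(P_1)$; it then applies Proposition \ref{prop:valofmultiples} to $P_1$ with multiplier $d$, so the correction term appears directly as $\delta_{\gamma,\frac{rn}{d}P}$ and no reconciliation is needed (note your $b=d^{-1}\bmod r$ agrees with the paper's $c\bmod r$, since $cd\equiv 1\pmod r$). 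You instead work with the single auxiliary point $T=\frac{n}{d}P+bQ$ satisfying $dT=nP+Q$ and $rT=\frac{rn}{d}P$, deduce $\gamma\in\supp D_T$ from Proposition \ref{prop:valofmultiples}(i) and $\gcd(d,r)=1$, and apply the multiplication formula to $T$ itself; this yields the stronger conclusion that the bound holds with equality, at the cost of the $\delta_{\gamma,T}$ versus $\delta_{\gamma,rT}$ bookkeeping, which you correctly dispatch with Lemma \ref{lem:approxdelta} (exponent $\ord_p(r)=0$ when $p\nmid r$, and $e=0$ when $p\mid r$). Both arguments are sound; yours pins down $h_\gamma(T)$ exactly where the paper only bounds it from below, while the paper's choice of base point makes the stated form of $\epsilon_{d,\gamma}$ come out for free.
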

	\begin{proof}[Proof]
		We denote $P_1=\frac{rn}{d}P$ and $P_2=nP+Q$. Since $\gcd(r,d)=1=\gcd(r,d-r)=1$, there exists $a,c\in\mathbb{Z}$ such that $ar+c(d-r)=1$ and so
		\[\frac{n}{d}P+cQ=ar\frac{n}{d}P+c(d-r)\frac{n}{d}P+cQ=(a-c)\frac{rn}{d}P+c(nP+Q)=(a-c)P_1+cP_2.\]
		First suppose $p\mid r$, then $p\nmid d$. Since $h_\gamma(P_1),h_\gamma(P_2)\geq 1$, we then have by Proposition \ref{prop:valofmultiples} that $h_{\gamma}(P_1)=h_{\gamma}(dP_1)=h_{\gamma}(rP_2)\geq h_{\gamma}(P_2).$ Denote $s:=h_{\gamma}(P_2)$, then $P_1,P_2\in E(K)_{\gamma,s}$ and so $\frac{n}{d}P+cQ=(a-c)P_1+cP_2\in E(K)_{\gamma,s}$. Since $\frac{n}{d}P+cQ$ is non-zero, we then have $h_{\gamma}(nP+Q)=h_{\gamma}(P_2)=s\leq h_{\gamma}(\frac{n}{d}P+cQ).$
		
		Now suppose $p\nmid r$. Again, by Proposition \ref{prop:valofmultiples}, we then have $h_\gamma(P_2)=h_\gamma(rP_2)=h_\gamma(dP_1)=p^eh_\gamma(P_1)+\epsilon_{d,\gamma}.$	Denote $t:=h_\gamma(P_1)\geq 1$, then $P_1,P_2\in E(K)_{\gamma,t}$ and so $\frac{n}{d}P+cQ\in E(K)_{\gamma,t}$. We obtain $h_\gamma(\frac{n}{d}P+cQ)\geq t$ and so $h_\gamma(nP+Q)=h_\gamma(P_2)=p^eh_\gamma(P_1)+\epsilon_{d,\gamma}\leq p^eh_\gamma\left(\frac{n}{d}P+cQ\right)+\epsilon_{d,\gamma}$. In both cases, the corollary follows by using that $Q$ is an $r$-torsion point and putting $0\leq b<r$ with $b\equiv c\pmod{r}$.
	\end{proof}
	We are now able to prove Theorem \ref{thm:main}. Suppose $n$ is a positive integer such that $D_{nP+Q}$ does not have a primitive divisor. Combining Proposition \ref{prop:canonheight} with Lemma \ref{lem:heightbounds}, we have for some positive constant $C_{P,Q,E}$, depending only on $P,Q$ and $E$, that
	\begin{align}\label{eq:approxCanonHeightAsDegDNPQ}n^2\hat{h}(P)=\hat{h}(nP)\leq \hat{h}(nP+Q)+nC_{P,Q,E}&=\deg D_{nP+Q}+O_{E,P,Q}(n) \nonumber\\
		&=\sum_{\gamma\in \supp D_{nP+Q}}h_\gamma(nP+Q)+O_{E,P,Q}(n).\end{align}
	We will apply Corollary \ref{cor:boundbyassociatedd} to bound the latter sum. However, we can not apply Corollary \ref{cor:boundbyassociatedd} to the $\gamma\in\supp D_{nP+Q}$ that also lie in $S$. For those, we use the next lemma.
	\begin{lemma}\label{lem:ApproxPointsS}
		$\sum_{\gamma\in S}h_\gamma(nP+Q)=O_{E,P,Q}(n).$
	\end{lemma}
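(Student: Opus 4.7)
The plan is to reduce the sum to a $\gamma$-by-$\gamma$ bound and then invoke finiteness of $S$. Note first that $S$ is finite, being a finite union (over positive divisors $b$ of $r$ with $b<r$) of the finite supports of the effective divisors $D_{bQ}$; its cardinality depends only on $E$ and $Q$. So it suffices to establish, for each $\gamma \in S$, an estimate of the form $h_\gamma(nP+Q)=O_{E,P,Q,\gamma}(n)$, with constants independent of $n$.

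Fix $\gamma \in S$. The key observation is that since $Q$ has order $r$,
\[
r(nP+Q)=rnP.
\]
Writing $h := h_\gamma(nP+Q)$, the point $nP+Q$ lies in $E(K)_{\gamma,h}$, which is a subgroup of $E(K)$ (this is the group-theoretic content of the discussion preceding Lemma \ref{lem:torsionptdiv0}). Hence $rnP=r(nP+Q)$ lies in the same subgroup, giving
\[
h_\gamma(rnP)\ \geq\ h_\gamma(nP+Q).
\]
This is the heart of the argument: the torsion of $Q$ lets us dominate the term $h_\gamma(nP+Q)$ from the non-divisibility sequence $\{D_{nP+Q}\}$ by a term from the honest divisibility sequence $\{D_{mP}\}$, to which Corollary \ref{cor:ApproxValDnr} applies directly.

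Applying Corollary \ref{cor:ApproxValDnr} to the non-torsion point $P$ then yields $h_\gamma(rnP)=O_{E,P,\gamma}(rn)$, and since $r$ depends only on $Q$, this is $O_{E,P,Q,\gamma}(n)$. Summing over the finite set $S$ gives
\[
\sum_{\gamma\in S} h_\gamma(nP+Q)\ \leq\ \sum_{\gamma\in S} h_\gamma(rnP)\ =\ O_{E,P,Q}(n),
\]
as required. I do not anticipate any serious obstacle: once the identity $r(nP+Q)=rnP$ is spotted, the subgroup property of $E(K)_{\gamma,h}$ and the per-point bound of Corollary \ref{cor:ApproxValDnr} do all the work. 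The only subtlety to track is that the implied constant in Corollary \ref{cor:ApproxValDnr} genuinely depends on $\gamma$, which is precisely why the finiteness of $S$ (and its independence of $n$) is what makes the argument succeed.
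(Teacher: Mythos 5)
Your proof is correct and follows essentially the same route as the paper: both reduce the claim to the inequality $h_\gamma(nP+Q)\leq h_\gamma(r(nP+Q))=h_\gamma(rnP)$ via the torsion of $Q$, then apply Corollary \ref{cor:ApproxValDnr} and the finiteness of $S$. The only (immaterial) difference is that you justify the key inequality via the subgroup property of $E(K)_{\gamma,h}$, whereas the paper cites Proposition \ref{prop:valofmultiples}; both justifications are valid.
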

	\begin{proof}[Proof]
		By Proposition \ref{prop:valofmultiples}, we have for each $\gamma\in C(k)$ that $h_{\gamma}(nP+Q)\leq h_{\gamma}(r(nP+Q))=h_{\gamma}(rnP)$, since $Q$ has order $r$. By Corollary \ref{cor:ApproxValDnr}, we have $h_{\gamma}(rnP)=O_{E,P,\gamma}(rn)$. Combining, we obtain 
		\[\sum_{\gamma\in S}h_\gamma(nP+Q)\leq \sum_{\gamma\in S}h_\gamma(rnP)=\sum_{\gamma\in S}O_{E,P,\gamma}(rn)=O_{E,P,Q}(n),\]
		where the last step follows since both $r$ and $S$ depend only on $E$ and $Q$.
	\end{proof}
	Denote $T:=\supp D_{nP+Q}\setminus S$. By Proposition \ref{prop:gammainsupprndaswell}, we find for each $\gamma\in T$ an associated positive integer $d_\gamma$ dividing $n$, coprime with $r$, and larger than $r$. We define \[\mathcal{D}_n:=\{d\in\mathbb{N}:d\mid n,\ d>r\ \text{and}\ \gcd(d,r)=1\}.\] Given $d\in \mathcal{D}_n$, we obtain from the proof of Corollary \ref{cor:boundbyassociatedd} an associated non-negative integer $b_d<r$. Given a positive integer $d$, we denote $e_{d}=0$ if $p=0$ and $e_d=\ord_p(d)$ if $p>0$. Suppose $\gamma\in T$, then we have by Proposition \ref{prop:gammainsupprndaswell} and Corollary \ref{cor:boundbyassociatedd} that
	\[h_\gamma(nP+Q)\leq p^{e_{d_\gamma}}h_\gamma\left(\frac{n}{d_\gamma}P+b_{d_\gamma} Q\right)+\epsilon_{d_\gamma,\gamma}.\] 
	Since $b_{d_\gamma}$ only depends on $d_\gamma$ and $r$, we obtain for any divisor $d\in \mathcal{D}_n$ an associated non-negative integer $b_d<r$, such that above inequality holds if $d=d_\gamma$ for some $\gamma\in T$. We can thus make the approximation
	\begin{align}\label{eq:approxSumTpart}\sum_{\gamma\in T}h_\gamma(nP+Q)&\leq\sum_{\gamma\in T} p^{e_{d_\gamma}}h_\gamma\left(\frac{n}{d_\gamma}P+b_{d_\gamma} Q\right)+\epsilon_{d_\gamma,\gamma} \nonumber\\
		&\leq\sum_{\substack{d\in \mathcal{D}_n}}\ \sum_{\gamma\in T} p^{e_{d}}h_\gamma\left(\frac{n}{d}P+b_{d} Q\right)+\underbrace{\sum_{\gamma\in T}\epsilon_{d_\gamma,\gamma}}_{W(n,P,Q)} \nonumber\\
		&\leq\sum_{\substack{d\in \mathcal{D}_n}}p^{e_d}\deg\left(D_{\frac{n}{d}P+b_dQ}\right)+W(n,P,Q) \nonumber\\
		&=\sum_{\substack{d\in \mathcal{D}_n}}p^{e_d}\left(\frac{1}{2}h\left(\frac{n}{d}P+b_dQ\right)+O_E(1)\right)+W(n,P,Q),
	\end{align}
	where the last equality follows from Lemma \ref{lem:heightvsdegree}.
	\begin{lemma}\label{lem:approxwnpq}
		$W(n,P,Q)=O_{E,P,Q}(n)$.
	\end{lemma}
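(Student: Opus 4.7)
The plan is to show that only finitely many points $\gamma$ contribute non-trivially to $W(n,P,Q)$ and that each contributes $O_{E,P,Q}(n)$. If $p = 0$ then Proposition \ref{prop:valofmultiples}(ii) supplies no correction term, so $\epsilon_{d_\gamma, \gamma} = 0$ identically and $W(n,P,Q) = 0$. Assume henceforth $p > 0$.

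Inspecting the piecewise definition of $\epsilon_{d_\gamma, \gamma}$ in Corollary \ref{cor:boundbyassociatedd}, one sees that $\epsilon_{d_\gamma, \gamma} = 0$ whenever $h_{E, \gamma} = 0$: in the first case this is immediate, and the second and third cases apply only when $h_{E, \gamma} \geq p > 0$. By the discussion preceding Proposition \ref{prop:valofmultiples}, the set $F_E := \{\gamma \in \mathcal{C}(k) : h_{E, \gamma} > 0\}$ is finite and depends only on $E$. It therefore suffices to prove $\epsilon_{d_\gamma, \gamma} = O_{E,P,Q}(n)$ for each individual $\gamma \in T \cap F_E$; summing over the at most $|F_E|$ such $\gamma$ then yields the lemma.

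Fix $\gamma \in T \cap F_E$ and write $e := e_{d_\gamma} = \ord_p(d_\gamma)$. Since $d_\gamma \mid n$ by Proposition \ref{prop:gammainsupprndaswell}, one has $p^e \leq d_\gamma \leq n$. Put $C_1 := \max_{\gamma' \in \mathcal{C}(k)} h_{E, \gamma'}$, a constant depending only on $E$, and let $C_{E,P}$ be the constant produced by Lemma \ref{lem:approxdelta} applied with $R = P$, so that $\delta_{\gamma, (rn/d_\gamma)P}(s) \leq (rn/d_\gamma) C_{E,P}$ for every integer $0 \leq s \leq j$. A direct case analysis of the three branches of $\epsilon_{d_\gamma,\gamma}$ then gives bounds of the required form: the first case yields $\epsilon_{d_\gamma, \gamma} \leq p^e - 1 \leq n$ (using $h_{E, \gamma} \leq p-1$); the second yields $\epsilon_{d_\gamma, \gamma} \leq rn \cdot C_{E,P}$; and in the third case one bounds $\frac{p^{e-j} - 1}{p-1} h_{E, \gamma} \leq \frac{p^e C_1}{p-1} \leq \frac{n C_1}{p-1}$ and $p^{e-j} \delta_{\gamma, (rn/d_\gamma)P}(j) \leq p^e \cdot (rn/d_\gamma) C_{E,P} / p^j \leq rn \cdot C_{E,P}$.

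The main subtlety lies in the third case, where the factor $p^{e-j}$ can be arbitrarily large; the crucial cancellation is that the bound $p^e \leq d_\gamma$ exactly kills the $1/d_\gamma$ produced by Lemma \ref{lem:approxdelta}, leaving a bound that is linear in $n$ with a constant depending only on $E$, $P$, and $Q$. All the other cases are elementary, and after summing over the finite set $T \cap F_E$ the lemma follows at once.
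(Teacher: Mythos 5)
Your proof is correct and follows essentially the same route as the paper: reduce to the finitely many $\gamma$ with $h_{E,\gamma}\neq 0$ (a set depending only on $E$), then bound each $\epsilon_{d_\gamma,\gamma}$ by $O_{E,P,Q}(n)$ using the definition of $\epsilon_{d_\gamma,\gamma}$ together with the bound from the proof of Lemma \ref{lem:approxdelta}. The only difference is that you spell out the three-branch case analysis (in particular the cancellation $p^{e}\le d_\gamma$ against the $1/d_\gamma$ in the third branch), which the paper leaves implicit.
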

	\begin{proof}[Proof]
		Fix some $\gamma\in T$ for which $\epsilon_{d_{\gamma},\gamma}>0$. Since $r<d_\gamma\leq n$, it follows from (the proof of) Lemma \ref{lem:approxdelta} and the definition of $\epsilon_{d_\gamma,\gamma}$ that $\epsilon_{d_\gamma,\gamma}=O_{E,P}(rn)=O_{E,P,Q}(n)$. If $\gamma'\in T$ such that $h_{E,\gamma'}=0$, then $\epsilon_{d_{\gamma'},\gamma'}=0$. The lemma then follows since there are only finitely many $\gamma\in \mathcal{C}(k)$ such that $h_{E,\gamma}\neq 0$ and $\#\{\gamma\in \mathcal{C}(k):h_{E,\gamma}\neq 0\}$ depends only on $E$. 
	\end{proof}
	By Lemma \ref{lem:heightbounds}, we find a constant $C:=\max_{0\leq b<r}C_{bQ}$, depending only on $Q$ and $E$, such that for each $d\in \mathcal{D}_n$, we have $h\left(\frac{n}{d}P+b_dQ\right)\leq 2h\left(\frac{n}{d}P\right)+C.$ Combining  this with Proposition \ref{prop:canonheight} and the definition of $\mathcal{D}_n$, we obtain
	\begin{align}\label{eq:convertHeightToCanonical}
		\sum_{\substack{d\in \mathcal{D}_n}}p^{e_d}\left(\frac{1}{2}h\left(\frac{n}{d}P+b_dQ\right)+O_E(1)\right)&\leq \sum_{\substack{d\in \mathcal{D}_n}}p^{e_d}\left(h\left(\frac{n}{d}P\right)+O_{E,Q}(1)\right) \nonumber\\
		&= \sum_{\substack{d\in \mathcal{D}_n}}p^{e_d}\left(2\hat{h}\left(\frac{n}{d}P\right)+O_{E,Q}(1)\right) \nonumber\\
		&\leq 2n^2\hat{h}\left(P\right)\sum_{\substack{d\in \mathcal{D}_n}}p^{e_d}\frac{1}{d^2}+ \sum_{d\mid n}p^{e_d}O_{E,Q}(1).
	\end{align}
	For the last term, we apply the following lemma.
	\begin{lemma}
		For any positive constant $\alpha\in\mathbb{R}$, we have $\sum_{\substack{d\mid n}}p^{e_d}=o(n^{1+\alpha})$.
	\end{lemma}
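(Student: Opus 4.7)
The plan is to split on the characteristic and reduce everything to a standard estimate for the ordinary divisor function $\tau(n)$.

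First I would dispose of the case $p=0$. Here $e_d=0$ for all $d$, so (interpreting $p^{e_d}$ as $1$) the sum equals $\tau(n)$, the number of divisors of $n$. A classical fact is that $\tau(n)=o(n^{\alpha})$ for every $\alpha>0$, which certainly gives $\tau(n)=o(n^{1+\alpha})$.

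For $p>0$, the strategy is to separate the $p$-part of $n$ from its prime-to-$p$ part. Write $n=p^a m$ with $\gcd(m,p)=1$; then every divisor $d$ of $n$ has a unique expression $d=p^i m'$ with $0\le i\le a$ and $m'\mid m$, and $e_d=\ord_p(d)=i$. Summing over these gives
\[
\sum_{d\mid n}p^{e_d}=\Bigl(\sum_{i=0}^{a}p^i\Bigr)\Bigl(\sum_{m'\mid m}1\Bigr)=\frac{p^{a+1}-1}{p-1}\,\tau(m)\le \frac{p}{p-1}\,p^a\,\tau(m)\le \frac{p}{p-1}\,n\,\tau(n).
\]
Combining this with $\tau(n)=o(n^{\alpha})$ then yields $\sum_{d\mid n}p^{e_d}=o(n^{1+\alpha})$, as required.

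No serious obstacle is anticipated; the only nontrivial input is the standard upper bound $\tau(n)=o(n^{\alpha})$ for arbitrary $\alpha>0$, which I would cite from any elementary analytic number theory reference (e.g.\ via $\tau(n)\le 2^{\Omega(n)}$ together with $\Omega(n)=O(\log n/\log\log n)$, or more directly the bound $\tau(n)\ll_{\varepsilon} n^{\varepsilon}$). Everything else is the bookkeeping shown above.
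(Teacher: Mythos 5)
Your proof is correct and follows essentially the same route as the paper: both factor the sum over divisors as a geometric series in $p$ times the divisor-counting function of the prime-to-$p$ part, bound the geometric series by a constant times $p^{\ord_p(n)}\le n$, and conclude with the classical estimate $\tau(n)=o(n^{\alpha})$.
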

	\begin{proof}[Proof]
		The statement is immediate if $p=0$, so suppose $p>0$ and denote $e:=\ord_p(n)$. We let $\delta:\mathbb{N}\to\mathbb{N},m\mapsto\sum_{d\mid m}1$ denote the divisor function, then one can show that \[\sum_{d\mid n}p^{\ord_p(d)}=\frac{p^{e+1}-1}{(e+1)(p-1)}\delta(n)\leq 2p^e\delta(n)\leq 2n\delta(n).\]
		By \cite[p.296]{apostol2013introduction}, we have for any positive constant $\alpha$ that $\delta(n)=o(n^\alpha),$ so the lemma follows.
	\end{proof}
	For the rest of this section, fix some constant $0<\alpha<1$, then $\sum_{\substack{d\mid n}}p^{e_d}O_{E,Q}(1)=o(n^{1+\alpha})$. Combining this with Lemma \ref{lem:approxwnpq}, (\ref{eq:approxSumTpart}) and (\ref{eq:convertHeightToCanonical}), we have proved the following corollary.
	\begin{corollary}\label{cor:approxSumT}
		$\sum_{\gamma\in T}h_\gamma(nP+Q)\leq 2n^2\hat{h}\left(P\right)\sum_{d\in \mathcal{D}_n}p^{e_d}\frac{1}{d^2}+o(n^{1+\alpha})$.
	\end{corollary}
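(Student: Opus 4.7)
The plan is to chain together the three estimates already displayed in the discussion leading up to this corollary, namely \eqref{eq:approxSumTpart}, \eqref{eq:convertHeightToCanonical}, Lemma \ref{lem:approxwnpq}, and the divisor-function bound just proved. There is no new ingredient to introduce; the work is entirely in verifying that the error terms collapse into $o(n^{1+\alpha})$.

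First, I would start from \eqref{eq:approxSumTpart}, which gives
\[
\sum_{\gamma\in T}h_\gamma(nP+Q)\ \leq\ \sum_{d\in\mathcal{D}_n}p^{e_d}\left(\tfrac{1}{2}h\!\left(\tfrac{n}{d}P+b_dQ\right)+O_E(1)\right)+W(n,P,Q).
\]
Next, I would apply \eqref{eq:convertHeightToCanonical} to rewrite the first sum on the right as
\[
\sum_{d\in\mathcal{D}_n}p^{e_d}\left(\tfrac{1}{2}h\!\left(\tfrac{n}{d}P+b_dQ\right)+O_E(1)\right)\ \leq\ 2n^2\hat{h}(P)\sum_{d\in\mathcal{D}_n}\frac{p^{e_d}}{d^2}+\sum_{d\mid n}p^{e_d}O_{E,Q}(1),
\]
using Lemma \ref{lem:heightbounds}(ii) to trade $h(\tfrac{n}{d}P+b_dQ)$ for $2h(\tfrac{n}{d}P)+O_{E,Q}(1)$, then Proposition \ref{prop:canonheight}(i)--(ii) to pass from $h$ to $\hat{h}$ and to extract the factor $(n/d)^2\hat{h}(P)$.

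Finally, I would absorb the two remaining error terms into $o(n^{1+\alpha})$. Lemma \ref{lem:approxwnpq} gives $W(n,P,Q)=O_{E,P,Q}(n)$, and the preceding lemma gives $\sum_{d\mid n}p^{e_d}=o(n^{1+\alpha})$, so the $O_{E,Q}(1)$ factor times this sum is also $o(n^{1+\alpha})$. Since $O_{E,P,Q}(n)=o(n^{1+\alpha})$ for any fixed $\alpha>0$, combining both estimates yields
\[
\sum_{\gamma\in T}h_\gamma(nP+Q)\ \leq\ 2n^2\hat{h}(P)\sum_{d\in\mathcal{D}_n}\frac{p^{e_d}}{d^2}+o(n^{1+\alpha}),
\]
as required. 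There is no substantial obstacle here since every step has been packaged into a preceding lemma; the only thing to keep in mind is that the constant implicit in $o(n^{1+\alpha})$ may depend on $E,P,Q$ and the fixed choice of $\alpha$, which is acceptable because $\alpha$ is fixed for the remainder of the section.
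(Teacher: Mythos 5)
Your proposal is correct and is essentially identical to the paper's argument: the corollary is stated there precisely as the result of combining \eqref{eq:approxSumTpart}, \eqref{eq:convertHeightToCanonical}, Lemma \ref{lem:approxwnpq}, and the divisor-function estimate $\sum_{d\mid n}p^{e_d}=o(n^{1+\alpha})$, exactly as you chain them. Your closing remark about the implied constants depending on $E,P,Q$ and the fixed $\alpha$ is also consistent with the paper's convention.
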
 
	 Putting everything together, it follows by (\ref{eq:approxCanonHeightAsDegDNPQ}), Lemma \ref{lem:ApproxPointsS} and Corollary \ref{cor:approxSumT} that if $D_{nP+Q}$ does not have a primitive divisor, then
	\begin{align*}
		n^2\hat{h}(P)=\sum_{\gamma\in \supp D_{nP+Q}}h_\gamma(nP+Q)+O_{E,P,Q}(n)&=\sum_{\gamma\in T}h_{\gamma}(nP+Q)+\sum_{\gamma\in S}h_\gamma(nP+Q)+O_{E,P,Q}(n)\\
		&\leq2n^2\hat{h}\left(P\right)\sum_{d\in \mathcal{D}_n}p^{e_d}\frac{1}{d^2}+o(n^{1+\alpha}).\end{align*}
	Since $\hat{h}(P)>0$ by Proposition \ref{prop:canonheight} and $0<\alpha<1$, we see that if $\sum_{d\in \mathcal{D}_n}p^{e_d}\frac{1}{d^2}<1/2$ for all $n$, then above inequality can only hold for bounded $n$. So the theorem follows if we can prove that when $p$ and $r$ are entries in Table \ref{tbl:mainthm}, then $\sum_{d\in \mathcal{D}_n}p^{e_d}\frac{1}{d^2}<1/2$ for all $n$. If $p=0$ or $p>3$ and $p\mid r$, then $e_d=0$ for all $d\in \mathcal{D}_n$, so 
	\begin{align}\label{eq:approxZetapeq0}
		\sum_{d\in \mathcal{D}_n}p^{e_d}\frac{1}{d^2}=\sum_{\substack{d\in \mathcal{D}_n}}\frac{1}{d^2}\leq \sum_{\substack{d\mid n,d>2}}\frac{1}{d^2}\leq  \zeta(2)-1-\frac{1}{4}\approx .395<1/2.
	\end{align}
	Next, assume $p>3$. We are left with the values in Table \ref{tbl:mainthm} with $p\nmid r$. Denote $e=\ord_{p}(n)$ and write $n=n_0p^e$. Then
	\[\sum_{d\in \mathcal{D}_n}p^{e_d}\frac{1}{d^2}\leq\sum_{\substack{d\mid n,d>r}}p^{\ord_p(d)}\frac{1}{d^2}=\sum_{d_0\mid n_0,d_0>r}\frac{1}{d_0^2}+\sum_{i=1}^ep^{-i}\sum_{\substack{d_0\mid n_0,d_0p^i>r}}\frac{1}{d_0^2}.\]
	Using that $\sum_{i=1}^e p^{-i}=\frac{1-p^{-e}}{p-1}< \frac{1}{p-1}$, it follows that
	\begin{align*}\sum_{d_0\mid n_0,d_0>r}\frac{1}{d_0^2}+\sum_{i=1}^ep^{-i}\sum_{\substack{d_0\mid n_0,d_0p^i>r}}\frac{1}{d_0^2}&\leq \zeta(2)-1-1/4-1/9-\ldots-1/r^2+\frac{1}{p-1}\sum_{d_0\mid n_0}\frac{1}{d_0^2}\\&\leq \zeta(2)\left(1+\frac{1}{p-1}\right)-1-1/4-1/9-\ldots-1/r^2.\end{align*}
	A calculation shows that the entries in Table \ref{tbl:mainthm} with $p$ not dividing $r$ are precisely those for which 
	\[\zeta(2)\left(1+\frac{1}{p-1}\right)-1-1/4-1/9-\ldots-1/r^2<1/2,\]
	thus finishing the proof of Theorem \ref{thm:main}.
	\section{Necessity of the conditions in Theorem \ref{thm:main}}\label{sec:conditions}
	We end this paper by discussing the necessity of some of the hypotheses in Theorem \ref{thm:main}. 
	\subsection{Assumption that the elliptic curve is ordinary}
	Suppose $p>0$. Suppose all our previous assumptions hold, except that $E$ is no longer ordinary, and we also allow $p=2,3$. First consider the sequence $\{D_{nP}\}$. In \cite[Section 9]{naskrkecki2016divisibility}, it is shown that there then exist examples for which there does not exist a bound $N$ such that $D_{nP}$ has a primitive divisor for all $n\geq N$. We extend the constructions in [loc. cit.] to obtain counterexamples for the sequence $\{D_{nP+Q}\}$ as well.
	\begin{example}\label{ex:counter}
		Suppose $p>2$ is a prime number. Let $\alpha,\beta\in\mathbb{F}_p$ be such that $E_0:y^2=x^3+\alpha x+\beta$ is a supersingular elliptic curve (this is possible by \cite[Theorem 14.18]{cox2011primes} for $p\geq 5$ and for $p=3$ we take the equation $y^2=x^3+x$). Following \cite[Example 9.3]{naskrkecki2016divisibility}, consider the function field $K_0:=\mathbb{F}_p(t)$ and put $s=t^3+\alpha t+\beta$. The curve $E_0$ is then isomorphic over the algebraic closure $\overline{K_0}$ to the elliptic curve $E/K_0$ given by the equation $y^2=x^3+\alpha s^2x+\beta s^3$ through the isomorphism $(x,y)\mapsto (xs,ys^{3/2})$. Since $E_0$ is defined over $\mathbb{F}_p$, we have for $[p]:E_0\to E_0$ that $x\left([p](x,y)\right)=x^{p^2}$, see \cite[Exercise 5.16]{silverman2009arithmetic}. Combining with the isomorphism $E_0\cong E$, we have for each positive integer $k$ and $p^k:E\to E$ that $x\left([p^k](x,y)\right)=s\left(\frac{x}{s}\right)^{p^{2k}}$. From this formula, we deduce that $P=(ts, s^2)\in E(K_0)$ is non-torsion. Denote $K:=\overline{\mathbb{F}_p}(t)$ and let $L/K$ be some finite field extension that does not contain $s^{1/2}$. Then $E$ is not $L$-isomorphic to an elliptic curve defined over $\overline{\mathbb{F}_p}$ and $L$ is a function field over an algebraically closed field. Suppose $Q\in E(L)$ is an $r$-torsion point for some integer $r>1$. Since $E_0$ is supersingular, $p$ does not divide $r$ and so there exists a positive integer $k$ such that $p^k\equiv 1\pmod{r}$. Fix such an integer $k$ and denote $P+Q=(x',y')$. Then, for each positive integer $\ell$, we have $[p^{\ell k}](P+Q)=p^{\ell k}P+Q$ and so $x\left([p^{\ell k}]P+Q\right)= s\left(\frac{x'}{s}\right)^{p^{2\ell k}}$. From this expression, it follows that there are infinitely many terms in the sequence of divisors $\{D_{nP+Q}\}$ that do not have a primitive divisor. To finish the counterexample, we are left with proving the existence of a field extension $L/K$ such that $E(L)[r]$ is non-trivial for some positive integer $r>1$ and $s^{1/2}\notin L$. Consider the $2$-torsion on $E$. The non-zero $2$-torsion points in $E(\overline{K})$ are given by the points $(\gamma,0)$ with $\gamma$ a root of $f:=X^3+\alpha s^2 X+\beta s^3\in K[X]$. Since $f$ is a degree $3$ polynomial over $K$, either $f$ contains a root in $K$, or $f$ is irreducible over $K$. In the first case, $E(K)$ already contains a non-trivial $2$-torsion point and we can take $L=K$. In the second case, we let $L$ be the field obtained by adjoining a root of $f$ to $K$, and it follows by comparing degrees that $s^{1/2}\notin L$. In both cases, $E(L)[2]$ is not trivial and $s^{1/2}\notin L$, so this produces a counterexample.
		
		A similar approach works for $p=2$. The elliptic curve $E_0:y^2+y=x^3$ is supersingular over $\mathbb{F}_2$. Denote $K_0=\mathbb{F}_2(t)$, then the curve $E/K_0$ given by the equation $y^2+(t^3-1)y=x^3$ is isomorphic to $E_0$ over an algebraic closure $\overline{K_0}$ of $K_0$. Namely, fix some root $\alpha\in\overline{K_0}$ to the equation $f:=X^3-t^3+1$ in $K_0[X]$, then an isomorphism $E\to E_0$ is given by $(x,y)\mapsto(\alpha^{-2}x,\alpha^{-3}y)$. A calculation shows that $f$ is irreducible over $K:=\overline{\mathbb{F}_p}(t)$, so $E$ is not $K$-isomorphic to an elliptic curve defined over $\overline{\mathbb{F}_p}$. On $E_0$, we have $x\left([2](x,y)\right)=x^4$, so on $E$ we have for each positive integer $k$ that $x([2^k](x,y))=x^{4^k}\alpha^{2(1-4^k)}$. We deduce that the point $P=(t,1)\in E(K)$ is non-torsion and it again follows, similar to the $p>2$ case, that for $L/\overline{\mathbb{F}_2}(t)$ some finite field extension and $Q\in E(L)$ a torsion point of order $r>1$, there are infinitely many terms in the sequence of divisors $\{D_{nP+Q}\}$ that do not have a primitive divisor. The point $(0,0)\in E(K)$ is $3$-torsion, so since $f$ is irreducible over $K$, we can take $L=K$ and $Q=(0,0)$ to produce a counterexample. 
	\end{example}
	\begin{remark}
		The point $Q$ in Example \ref{ex:counter} has small order. Let us explain why this is necessary in our counterexample. Let $p$ be a prime number and use the same notation as in Example \ref{ex:counter}. Let $Q'\in E(\overline{K})$ be some torsion point of order $\ell>1$, then we showed that the sequence $\{D_{nP+Q'}\}$ will contain infinitely many terms that do not have a primitive divisor. However, the issue is that if $L/K$ is a field extension such that $Q'\in E(L)$, then $E$ will be isomorphic to $E_0$ over $L$ unless $\ell=2$ if $p>2$ and $\ell=3$ if $p=2$. To see this, let $\phi:E\to E_0$ denote the isomorphism, then we have the description $E[\ell]=\phi^{-1}(E_0[\ell])$. Suppose $p>2$, then $\phi^{-1}$ maps $(x,y)$ to $(xs^{-1},ys^{-3/2})$. So $Q'=(xs^{-1},ys^{-3/2})$ for certain $x,y\in\overline{\mathbb{F}_p}$. It follows that if $Q'\in E(L)$ for some field extension $L/K$, then $s^{1/2}\in L$ unless $y=0$, which is the case if and only if $\ell=2$. The $p=2$ case works similarly.
	\end{remark}
	
	\subsection{Assumption that $k$ is algebraically closed}
	It is possible to relax the condition of $k$ being algebraically closed if we assume that our elliptic curve is not isomorphic over $\overline{k}K$ to an elliptic curve over $\overline{k}$. We fix the following notation for this subsection. Let $k$ be a field of characteristic $p$ and let $\mathcal{C}/k$ be a non-singular, projective and geometrically integral curve. Let $\overline{k}$ denote an algebraic closure of $k$ and let $\mathcal{C}_{\overline{k}}$ denote the base extension of $\mathcal{C}$ to $\overline{k}$. Let $K$ denote the function field of $\mathcal{C}$ and let $K'=\overline{k}K$ denote the function field of $\mathcal{C}_{\overline{k}}$. Let $E/K$ be an ordinary elliptic curve. Suppose $P\in E(K)$ is a non-torsion point and suppose $Q\in E(K)$ is a torsion point of order $r$. We let $E_{K'}$ denote the base extension of $E$ to $K'$.
	
	Given a non-zero point $R\in E(K)$, we can define an effective divisor $D'_R\in \Div(\mathcal{C})$ similar to what we did in the algebraically closed case. Let $|\mathcal{C}|\subset \mathcal{C} $ denote the subset of closed points. Given $\gamma\in |\mathcal{C}|$, we have a corresponding valuation $v_\gamma$ on $K$, and an elliptic curve $E_\gamma/K$ that is minimal at $v_\gamma$ and isomorphic to $E$ over $K$. Let $\phi_\gamma:E\to E_\gamma$ denote this isomorphism, then we obtain for each $\gamma\in |\mathcal{C}|$ a non-negative integer $n_{\gamma,R}:=\max\{0,-1/2v_\gamma(x(\phi_\gamma(R)))\}$. There will only be finitely many $\gamma\in |\mathcal{C}|$ such that $n_{\gamma,R}\neq 0$. We define the effective divisor $D'_R:=\sum_{\gamma\in |\mathcal{C}|}n_{\gamma,R}\gamma\in\Div(\mathcal{C})$. Given a sequence of non-zero points $\{P_n\}\subset E(K)$, we again say that $P_n$ has a \textit{primitive divisor} if there exists $\gamma\in\supp D'_{P_n}$ such that $\gamma\notin\supp D'_{P_m}$ for any $1\leq m<n$.
	\begin{corollary}
		Suppose $E$ is not isomorphic over $K'$ to some elliptic curve $E_0/\overline{k}$. If either $r=1$ and $p\neq 2,3$ or the values of $p$ and $r$ are entries in Table \ref{tbl:mainthm}, then $D'_{nP+Q}$ has a primitive divisor for all $n$ sufficiently large.
	\end{corollary}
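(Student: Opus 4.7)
The plan is to reduce the statement to Theorem \ref{thm:main} by base-changing everything to $\overline{k}$. Because $\mathcal{C}$ is nonsingular and geometrically integral over $k$, the base change $\mathcal{C}_{\overline{k}}$ is a nonsingular projective (integral) curve over the algebraically closed field $\overline{k}$, with function field $K'$. Viewed as $E_{K'}/K'$, the elliptic curve $E$ remains ordinary (ordinariness is preserved under base change), and by hypothesis it is still not isomorphic over $K'$ to an elliptic curve defined over $\overline{k}$. The points $P,Q\in E(K)\subset E_{K'}(K')$ retain the properties of being non-torsion and $r$-torsion, respectively. Theorem \ref{thm:main} then supplies a constant $N$ such that for every $n\geq N$, the divisor $D_{nP+Q}\in\Div(\mathcal{C}_{\overline{k}})$ (in the sense of the algebraically closed setting) has a primitive divisor.

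The next step is to compare divisors upstairs and downstairs. Let $\pi:\mathcal{C}_{\overline{k}}\to\mathcal{C}$ denote the base change morphism. For each closed point $\gamma\in|\mathcal{C}|$ the fiber $\pi^{-1}(\gamma)$ consists of finitely many $\overline{k}$-points of $\mathcal{C}_{\overline{k}}$, and for each $\gamma'\in\pi^{-1}(\gamma)$ the valuation $\ord_{\gamma'}$ of $K'$ restricts to a positive integer multiple of $v_\gamma$ on $K$. The key technical step will be to establish the support compatibility
\[\gamma\in\supp D'_R \iff \pi^{-1}(\gamma)\subset \supp D_R\]
for every non-zero $R\in E(K)$, where on the right $D_R$ denotes the divisor on $\mathcal{C}_{\overline{k}}$ associated to $R$ viewed in $E_{K'}(K')$. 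This reduces to the assertion that $R$ reduces to $O$ modulo $v_\gamma$ if and only if it reduces to $O$ modulo $\ord_{\gamma'}$ for some (equivalently every) $\gamma'\in\pi^{-1}(\gamma)$. I would verify this via functoriality of the N\'eron reduction, or equivalently of the formal group, under the inclusion of completions $K_\gamma\hookrightarrow K'_{\gamma'}$: reduction to $O$ of a $K$-rational point is detected already on the residue field of $v_\gamma$, which embeds into the residue field of $\ord_{\gamma'}$, so the two conditions are equivalent.

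Granted this compatibility, the corollary falls out. Pick any $n\geq N$ and a primitive divisor $\gamma'\in\mathcal{C}_{\overline{k}}(\overline{k})$ of $D_{nP+Q}$, then set $\gamma:=\pi(\gamma')$. By compatibility $\gamma\in\supp D'_{nP+Q}$; if further $\gamma\in\supp D'_{mP+Q}$ for some $1\leq m<n$, then the compatibility forces $\gamma'\in\supp D_{mP+Q}$, contradicting primitivity of $\gamma'$ upstairs. Hence $\gamma$ is a primitive divisor of $D'_{nP+Q}$. The only non-formal step is the support compatibility, and beyond that the corollary is pure descent from the algebraic closure; the minor care needed concerns the possibility of inseparable residue field extensions at $\gamma$, but this only affects the ramification index relating $v_\gamma$ and $\ord_{\gamma'}$, which still preserves the sign of the valuation and hence the support condition.
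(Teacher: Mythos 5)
Your overall strategy --- apply Theorem \ref{thm:main} over $\overline{k}$ and descend the primitive divisor along $\pi:\mathcal{C}_{\overline{k}}\to\mathcal{C}$ --- is exactly the paper's. The gap is in your ``support compatibility'' step, which you assert at \emph{every} closed point and justify by saying that reduction to $O$ is detected on residue fields. This sidesteps the actual difficulty: by Lemma \ref{lem:descriptiondiv}, both $D_R$ and $D'_R$ are defined place by place via a Weierstrass model that is \emph{minimal} at that place, so $\gamma\in\supp D_R$ (upstairs) means $\ord_\gamma(x(\phi_\gamma(R)))<0$ for $\phi_\gamma$ an isomorphism onto a model minimal at $\ord_\gamma$ over $K'$, while $\pi(\gamma)\in\supp D'_R$ uses a model minimal at $v_{\pi(\gamma)}$ over $K$. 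If the $K$-minimal model at $v_{\pi(\gamma)}$ failed to remain minimal at $\ord_\gamma$, the two models would differ by $(x,y)\mapsto(u^{-2}x,u^{-3}y)$ with $\ord_\gamma(u)>0$, which can put $\gamma$ in the support upstairs while $\pi(\gamma)$ is absent downstairs (and conversely); ``reduction to $O$'' is a model-dependent notion, so appealing to functoriality of reduction does not by itself settle this. The claim can be repaired for $p\neq 2,3$ via the valuative minimality criterion ($\ord(\Delta)<12$ or $\ord(c_4)<4$), since $\Delta,c_4\in K$ and $\ord_\gamma\vert_K=v_{\pi(\gamma)}$, but no such argument appears in your write-up, and your closing remark that ramification ``preserves the sign of the valuation'' concerns the wrong valuation (that of $x$ for a fixed equation, not for the respective minimal ones). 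A smaller point: your biconditional requires the whole fibre $\pi^{-1}(\pi(\gamma))$ to lie in $\supp D_R$, whereas you only know this for the single primitive $\gamma$.

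The paper avoids proving any global compatibility. It observes that there are only finitely many points $\gamma_1,\dots,\gamma_m\in\mathcal{C}_{\overline{k}}(\overline{k})$ at which the fixed Weierstrass equation of $E_{K'}$ is non-minimal, and enlarges the bound (the constant $N_2$) so that for $n\geq N$ none of these can be the primitive divisor of $D_{nP+Q}$ produced by Theorem \ref{thm:main}. At all remaining places the same equation is simultaneously minimal over $K'$ and over $K$ (minimality descends trivially), so $\ord_\gamma(x(mP+Q))=v_{\pi(\gamma)}(x(mP+Q))$ and the supports match on the nose in both directions, which is all the descent needs. You should either adopt this finite-exclusion device or supply the minimality-preservation argument; as written, the one non-formal step of your proof is a genuine gap.
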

	\begin{proof}[Proof]
		Under the hypotheses, we know by Theorem $\ref{thm:main}$ that there exists a bound $N_1$ such that $D_{nP+Q}$ has a primitive divisor for all $n\geq N_1$. Let $\gamma_1,\ldots,\gamma_m$ be the points in $\mathcal{C}_{\overline{k}}(\overline{k})$ such that $E_{K'}$ is not minimal at $\ord_{\gamma_i}$. Let $N_2$ be a positive integer such that if $\gamma_i\in \supp D_{nP+Q}$ for some positive integer $n\geq N_2$, then $\gamma_i\in\supp D_{mP+Q}$ as well for some $1\leq m <N_2$. Put $N=\max\{N_1,N_2\}$ and let $n\geq N$, then there exists $\gamma\in\supp D_{nP+Q}$ with $\gamma\notin\supp D_{mP+Q}$ for any $1\leq m<n$. Let $\gamma'\in |\mathcal{C}|$ be such that $\ord_{\gamma}\vert_K=v_{\gamma'}$. Then $E$ is minimal at $v_{\gamma'}$ since $E_{K'}$ is minimal at $\ord_{\gamma}$. Since $x(nP+Q)\in K$, we have $-\frac{1}{2}v_{\gamma'}(x(nP+Q))=-\frac{1}{2}\ord_{\gamma}(x(nP+Q))>0$ and so $\gamma'\in \supp D'_{nP+Q}$. Similarly, if $\gamma'\in \supp D'_{mP+Q}$ for some $1\leq m <n$, then $\gamma\in \supp D_{mP+Q}$, which we assumed not to be the case. So $D'_{nP+Q}$ has a primitive divisor for all $n\geq N$.
	\end{proof}
	
	\subsection{Remaining pairs of $p$ and $r$}
	The $p\neq 2,3$ assumption is used at several steps in our proof. Most importantly, if $p=2$ or $p=3$ and $r>1$ any integer, then
	\begin{align}\label{eq:zeta2}\zeta(2)\left(1+\frac{1}{p-1}\right)-1-1/4-\ldots-1/r^2\geq \zeta(2)\left(1+\frac{1}{2}\right)-\zeta(2)=\frac{1}{2}\zeta(2)\approx 0.822> 1/2.
	\end{align}
	For a reason similar to the equality not holding in (\ref{eq:zeta2}), the proof that $D_{nP}$ has a primitive divisor for all $n$ sufficiently large does not work if $p=2,3$. To the author's knowledge, this is still an open problem. Interestingly enough, we see from our proof that if $p=2,3$ and $p$ divides the order of $Q$, then (\ref{eq:approxZetapeq0}) does hold, however our proof does not work in this case because we already use the assumption that $p\neq 2,3$ in Section \ref{sec:prelims}. All in all, it would be very interesting to further investigate the remaining $p=2,3$ case, either for $D_{nP+Q}$ or the classical $D_{nP}$ case. Additionally, it would be interesting to investigate what happens for the remaining pairs of $p$ and $r$, or what happens if $Q$ is a non-torsion point.
	\subsection*{Acknowledgements}
	This work originated from the author's master's thesis, which was written under supervision of Gunther Cornelissen. The author would like to thank him for the valuable conversations during this time. The author would also like to express his gratitude towards Jeroen Sijsling and Matteo Verzobio for useful comments on an earlier draft of this paper. 
	
	{\bibliography{PrimDivECFFArbitraryOrder}}
	\bibliographystyle{plain}
	
\end{document}